
\documentclass[11pt]{amsart}
\usepackage{amsmath, amssymb}
\usepackage{amsfonts}
\usepackage{graphicx}
\usepackage{mathrsfs}
\usepackage[arrow,matrix,curve,cmtip,ps]{xy}
\usepackage{amsthm}
\usepackage{enumerate}
\usepackage{color}
\usepackage[colorlinks]{hyperref}
\usepackage{tikz}

\allowdisplaybreaks

\newtheorem{thm}{Theorem}[section]
\newtheorem{lem}[thm]{Lemma}
\newtheorem{prop}[thm]{Proposition}
\newtheorem{cor}[thm]{Corollary}
\newtheorem*{theorem*}{Theorem}
\theoremstyle{remark}
\newtheorem{rem}[thm]{Remark}

\newtheorem{defn}[thm]{Definition}
\newtheorem{ex}[thm]{Example}

\numberwithin{equation}{section}


\newcommand{\g}{\mathcal{G}}
\newcommand{\go}{\mathcal{G}^0}
\newcommand{\gl}{\mathcal{G}^1}

\newcommand{\pGo}{\Phi(G^0)}
\newcommand{\gh}{\mathcal{G}/(H,B)}
\newcommand{\pgo}{\Phi(\mathcal{G}^0)}
\newcommand{\pgl}{\Phi(\mathcal{G}^1)}
\newcommand{\sg}{\Phi_{\mathrm{sg}}(G^0)}


\begin{document}
\title[Primitive ideals and pure infiniteness]{Primitive ideals and pure infiniteness of ultragraph $C^*$-algebras}

\author[Hossein Larki]{Hossein Larki}

\address{Department of Mathematics\\
Faculty of Mathematical Sciences and Computer\\
Shahid Chamran University of Ahvaz\\
P.O. Box: 83151-61357\\
Ahvaz\\
 Iran}
\email{h.larki@scu.ac.ir}


\date{\today}

\subjclass[2010]{46L55}

\keywords{ultragraph $C^*$-algebra, primitive ideal, pure infiniteness}

\begin{abstract}
Let $\g$ be an ultragraph and let $C^*(\g)$ be the associated $C^*$-algebra introduced by Tomforde. For any gauge invariant ideal $I_{(H,B)}$ of $C^*(\g)$, we approach the quotient $C^*$-algebra $C^*(\g)/I_{(H,B)}$ by the $C^*$-algebra of finite graphs and prove versions of gauge invariant and Cuntz-Krieger uniqueness theorems for it. We then describe primitive gauge invariant ideals and determine purely infinite ultragraph $C^*$-algebras (in the sense of Kirchberg-R${\o}$rdam) via Fell bundles.
\end{abstract}

\maketitle

\section{Introduction}

In order to bring graph $C^*$-algebras and Exel-Laca algebras together under one theory, Tomforde introduced in \cite{tom} the notion of ultragraphs and associated $C^*$-algebras. An ultragraph is basically a directed graph in which the range of each edge is allowed to be a nonempty set of vertices rather than a single vertex. However, the class of ultragraph $C^*$-algebras are strictly lager than the graph $C^*$-algebras as well as the Exel-Laca algebras (see \cite[Section 5]{tom2}). Due to some similarities, some of fundamental results for graph $C^*$-algebras, such as the Cuntz-Krieger and the gauge invariant uniqueness theorems, simplicity, and $K$-theory computation have been extended to the setting of ultragraphs \cite{tom,tom2}. In particular, by constructing a specific topological quiver $\mathcal{Q}(\g)$ from an ultragraph $\g$, Katsura et al. described some properties of the ultragraph $C^*$-algebra $C^*(\g)$ using those of topological quivers \cite{kat3}. They showed that every gauge invariant ideal of  $C^*(\g)$ is of the form $I_{(H,B)}$ corresponding to an admissible pair $(H,B)$ in $\g$.

Recall that for any gauge invariant ideal $I_{(H,B)}$ of a graph $C^*$-algebra $C^*(E)$, there is a (quotient) graph $E/(H,B)$ such that $C^*(E)/I_{(H,B)}\cong C^*(E/(H,B))$ (see \cite {bat2,bat}). So, the class of graph $C^*$-algebras contains such quotients, and results and properties of graph $C^*$-algebras may be applied for their quotients. For examples, some contexts such as simplicity, $K$-theory, primitivity, and topological stable rank are directly related to the structure of ideals and quotients.

Unlike the $C^*$-algebras of graphs and topological quivers, the class of ultragraph $C^*$-algebras $C^*(\g)$ is not closed under quotients. This causes some obstacles in studying the structure of ultragraph $C^*$-algebras. The initial aim of this article is to analyze the structure of the quotient $C^*$-algebras $C^*(\g)/I_{(H,B)}$ for any gauge invariant ideal $I_{(H,B)}$ of $C^*(\g)$. For the sake of convenience, we first introduce the notion of quotient ultragraph $\gh$ and a relative $C^*$-algebra $C^*(\gh)$ such that $C^*(\g)/I_{(H,B)}\cong C^*(\gh)$ and then prove the gauge invariant and the Cuntz-Krieger uniqueness theorems for $C^*(\gh)$. The uniqueness theorems help us to show when a representation of $C^*(\g)/I_{(H,B)}$ is injective. We see that the structure of $C^*(\gh)$ is close to that of graph $C^*$-algebras and we can use them to determine primitive gauge invariant ideals. Moreover, in Section 6, we consider the notion of pure infiniteness for ultragraph $C^*$-algebras in the sense of Kirchberg-R${\o}$rdam \cite{kirch00} which is directly related to the structure of quotients. We should note that the initial idea for definition of quotient ultragraphs has been inspired from \cite{jeo}.

The present article is organized as follows. We begin in Section 2 by giving some definitions and preliminaries about the ultragraphs and their $C^*$-algebras which will be used in the next sections. In Section 3, for any admissible pair $(H,B)$ in an ultragraph $\g$, we introduce the quotient ultragraph $\gh$ and an associated $C^*$-algebra $C^*(\gh)$. For this, the ultragraph $\g$ is modified by an extended ultragraph $\overline{\g}$ and we define an equivalent relation $\sim$ on $\overline{\g}$. Then $\gh$ is the ultragraph $\overline{\g}$ with the equivalent classes $\{[A]: A\in \overline{\g}^0\}$. In Section 4, by approaching with graph $C^*$-algebras, the gauge invariant and the Cuntz-Krieger uniqueness theorems will be proved for the quotient ultragraphs $C^*$-algebras. Moreover, we see that $C^*(\gh)$ is isometrically isomorphic to the quotient $C^*$-algebra $C^*(\g)/I_{(H,B)}$.

In Sections 5 and 6, using quotient ultragraphs, some graph $C^*$-algebra's techniques will be applied for the ultragraph $C^*$-algebras. In Section 5, we describe primitive gauge invariant ideals of $C^*(\g)$, whereas in Section 6, we characterize purely infinite ultragraph $C^*$-algebras (in the sense of \cite{kirch00}) via Fell bundles \cite{exe15,kwa17}.


\section{preliminaries}

In this section, we review basic definitions and properties of ultragraph $C^*$-algebras which will be needed through the paper. For more details, we refer the reader to \cite{tom} and \cite{kat3}.

\begin{defn}[\cite{tom}]
An \emph{ultragraph} is a quadruple $\g=(G^0,\gl,r_\g,s_\g)$ consisting of a countable vertex set $G^0$, a countable edge set $\gl$, the source map $s_\g: \gl \rightarrow G^0$, and the range map $r_\g:\gl \rightarrow \mathcal{P}(G^0)\setminus \{\emptyset\}$, where $\mathcal{P}(G^0)$ is the collection of all subsets of $G^0$. If $r_\g(e)$ is a singleton vertex for each edge $e\in \gl$, then $\g$ is an ordinary (directed) graph.
\end{defn}

For our convenience, we use the notation $\go$ in the sense of \cite{kat3} rather than \cite{tom,tom2}. For any set $X$, a nonempty subcollection of the power set $\mathcal{P}(X)$ is said to be an \emph{algebra} if it is closed under the set operations $\cap$, $\cup$, and $\setminus$. If $\g$ is an ultragraph, the smallest algebra in $\mathcal{P}(G^0)$ containing $\{\{v\}:v\in G^0\}$ and $\{r_\g(e):e\in \gl\}$ is denoted by $\go$. We simply denote every singleton set $\{v\}$ by $v$. So, $G^0$ may be considered as a subset of $\go$.

\begin{defn}
For each $n\geq 1$, a \emph{path $\alpha$ of length $|\alpha|=n$} in $\g$ is a sequence $\alpha=e_1\ldots e_n$ of edges such that $s(e_{i+1})\in r(e_i)$ for $1\leq i\leq n-1$. If also $s(e_1)\in r(e_n)$, $\alpha$ is called a \emph{loop} or a \emph{closed path}. We write $\alpha^0$ for the set $\{s_\g(e_i): 1\leq i\leq n\}$. The elements of $\go$ are considered as the paths of length zero. The set of all paths in $\g$ is denoted by $\g^*$. We may naturally extend the maps $s_\g,r_\g$ on $\g^*$ by defining $s_\g(A)=r_\g(A)=A$ for $A\in \go$, and $r_\g(\alpha)=r_\g(e_n)$, $s_\g(\alpha)=s_\g(e_1)$ for each path $\alpha=e_1\ldots e_n$.
\end{defn}

\begin{defn}[\cite{tom}]\label{defn2.3}
Let $\g$ be an ultragraph. A \emph{Cuntz-Krieger $\g$-family} is a set of partial isometries $\{s_e:e\in\gl\}$ with mutually orthogonal ranges and a set of projections $\{p_A:A\in\go\}$ satisfying the following relations:
\begin{enumerate}
\item[(UA1)] $p_\emptyset=0$, $p_A p_B=p_{A\cap B}$, and $p_{A\cup B}=p_A +p_B-p_{A\cap B}$ for all $A,B\in\go$,
\item[(UA2)] $s_e^* s_e=p_{r_\g(e)}$ for $e\in\gl$,
\item[(UA3)] $s_e s_e^*\leq p_{s_\g(e)}$ for $e\in \gl$, and
\item[(UA4)] $p_v=\sum_{s_\g(e)=v}s_e s_e^*$ whenever $0<|s_\g^{-1}(v)|<\infty$.
\end{enumerate}
The $C^*$-algebra $C^*(\g)$ of $\g$ is the (unique) $C^*$-algebra generated by a universal Cuntz-Krieger $\g$-family.
\end{defn}

By \cite[Remark 2.13]{tom}, we have
$$C^*(\g)=\overline{\mathrm{span}}\left\{s_\alpha p_A s_\beta^*:\alpha,\beta\in \g^*, A\in \go, \mathrm{~and~} r_\g(\alpha)\cap r_\g(\beta)\cap A\neq \emptyset \right\},$$
where $s_\alpha:=s_{e_1}\ldots s_{e_n}$ if $\alpha=e_1\ldots e_n$, and $s_\alpha:=p_A$ if $\alpha=A$.

\begin{rem}\label{rem2.4}
As noted in \cite[Section 3]{tom}, every graph $C^*$-algebra is an ultragraph $C^*$-algebra. Recall that if $E=(E^0,E^1,r_E,s_E)$ is a directed graph, a collection $\{s_e,p_v:v\in E^0, e\in E^1\}$ containing mutually orthogonal projections $p_v$ and partial isometries $s_e$ is called a \emph{Cuntz-Krieger $E$-family} if
\begin{enumerate}
\item[(GA1)] $s_e^* s_e=p_{r_E(e)}$ for all $e\in E^1$,
\item[(GA2)] $s_e s_e^*\leq p_{s_E(e)}$ for all $e\in E^1$, and
\item[(GA3)] $p_v=\sum_{s_E(e)=v}s_e s_e^*$ for every vertex $v\in E^0$ with $0<|s_E^{-1}(v)|<\infty$.
\end{enumerate}
We denote by $C^*(E)$ the universal $C^*$-algebra generated by a Cuntz-Krieger $E$-family.
\end{rem}

By the universal property, $C^*(\g)$ admits the \emph{gauge action} of the unit circle $\mathbb{T}$. By an \emph{ideal}, we mean a closed two-sided ideal. Using the properties of quiver $C^*$-algebras \cite{kat3}, the gauge invariant ideals of $C^*(\g)$ were characterized in \cite[Theorem 6.12]{kat3} via a one-to-one correspondence with the admissible pairs of $\g$ as follows.

\begin{defn}\label{defn2.5}
A subset $H\subseteq \go$ is said to be \emph{hereditary} if the following properties holds:
\begin{enumerate}
\item[(H1)] $s_\g(e)\in H$ implies $r_\g(e)\in H$ for all $e\in \gl$.
\item[(H2)] $A\cup B\in H$ for all $A,B\in H$.
\item[(H3)] If $A\in H,~B\in\go$, and $B\subseteq A$, then $B\in H$.
\end{enumerate}
Moreover, a subset $H\subseteq \go$ is called \emph{saturated} if for any $v\in G^0$ with $0<|s_\g^{-1}(v)|<\infty$, then $r_\g(s_\g^{-1}(v))\subseteq H$ implies $v\in H$. The \emph{saturated hereditary closure} of a subset $H\subseteq \go$ is the smallest hereditary and saturated subset $\overline{H}$ of $\go$ containing $H$.
\end{defn}

Let $H$ be a saturated hereditary subset of $\go$. The set of \emph{breaking vertices of $H$} is denoted by
$$B_H:=\left\{ w\in G^0: |s_\g^{-1}(w)|=\infty ~~\mathrm{but}~~ 0<|r_\g(s_\g^{-1}(w))\cap (\go\setminus H)|<\infty\right\}.$$
An \emph{admissible pair $(H,B)$ in $\g$} is a saturated hereditary set $H\subseteq \go$ together with a subset $B\subseteq B_H$. For any admissible pair $(H,B)$ in $\g$, we define the ideal $I_{(H,B)}$ of $C^*(\g)$ generated by
$$\{p_A:A\in \go\} \cup \left\{p_w^H:w\in B\right\},$$
where $p_w^H:=p_w-\sum_{s_\g(e)=w,~ r_\g(e)\notin H}s_e s_e^*$. Note that the ideal $I_{(H,B)}$ is gauge invariant and \cite[Theoerm 6.12]{kat3} implies that every gauge invariant ideal $I$ of $C^*(\g)$ is of the form $I_{(H,B)}$ by setting
$$H:=\left\{A:p_A\in I\right\} ~~ \mathrm{and} ~~ B:=\left\{ w\in B_H: p_w^H\in I\right\}.$$


\section{Quotient Ultragraphs and their $C^*$-algebras}

In this section, for any admissible pair $(H,B)$ in an ultragraph $\g$, we introduce the quotient ultragraph $\g/(H,B)$ and its relative $C^*$-algebra $C^*(\g/(H,B))$. We will show in Proposition \ref{prop5.1} that $C^*(\g/(H,B))$ is isomorphic to the quotient $C^*$-algebra $C^*(\g)/ I_{(H,B)}$.

Let us fix an ultragraph $\g=(G^0,\g^0,r_\g,s_\g)$ and an admissible pair $(H,B)$ in $\g$. For defining our quotient ultragraph $\gh$, we first modify $\g$ by an extended ultragraph $\overline{\g}$ such that their $C^*$-algebras coincide. For this, add the vertices $\{w':w\in B_H\setminus B\}$ to $G^0$ and denote $\overline{A}:=A\cup \{w':w\in A\cap (B_H\setminus B)\}$ for each $A\in \go$. We now define the new ultragraph $\overline{\g}=(\overline{G^0},\overline{\g}^1,\overline{r}_\g,\overline{s}_\g)$ by
\begin{align*}
\overline{G}^0&:= G^0\cup \{w':w\in B_H\setminus B\},\\
\overline{\g}^1&:=\gl,
\end{align*}
the source map
\[\overline{s}_\g(e):=\left\{
          \begin{array}{ll}
            (s_\g(e))' & \mathrm{if}~~ s_\g(e)\in B_H\setminus B ~~\mathrm{and}~~ r_\g(e)\in H \\
            s_\g(e) & \mathrm{otherwise},
          \end{array}
        \right.
\]
and the rang map $\overline{r}_\g(e):=\overline{r_\g(e)}$ for every $e\in \g^1$. In Proposition \ref{prop3.2} below, we will see that the $C^*$-algebras of $\g$ and $\overline{\g}$ coincide.

\begin{ex}
Suppose $\g$ is the ultragraph
\begin{center}
\begin{tikzpicture}[scale=1.8,shorten >=1pt,auto,thick]
\node (w) at (0,0) {$w$};
\node (u) at (-1,0) {$u$};
\node (v) at (1,1) {$v$};
\node (A) at (1,0) {$A$};

\draw[->] (w) -- node[below=1pt] {$f$} (A);
\draw[->] (w) -- node[below=2pt] {$(\infty)$} (v);
\path[->] (u) edge[bend left] node[above=2pt] {$e$} (v);
\path[->] (u) edge[bend left] node[above=1pt] {$e$} (w);
\path[->] (w) edge node[below=2pt] {$g$} (u);

\draw[dashed] (1,.5) ellipse (3mm and 8mm)
node[right=5mm] {$H$};

\end{tikzpicture}
\end{center}
where $(\infty)$ indicates infinitely many edges. If $H$ is the saturated hereditary subset of $\go$ containing $\{v\}$ and $A$, then we have $B_H=\{w\}$. For $B:=\emptyset$, consider the admissible pair $(H,\emptyset)$ in $\g$. Then the ultragraph $\overline{\g}$ associated to $(H,\emptyset)$ would be
\begin{center}
\begin{tikzpicture}[scale=1.8,shorten >=1pt,auto,thick]
\node (wG) at (3.5,0) {$w$};
\node (w'G) at (4,0) {$w'$};
\node (uG) at (2.5,0) {$u$};
\node (vG) at (5,1) {$v$};
\node (AG) at (5,0) {$A$};

\draw[->] (w'G) -- node[below=2pt] {$(\infty)$} (vG);
\draw[->] (w'G) -- node[below=1pt] {$f$} (AG);
\path[->] (uG) edge[bend left] node[above=2pt] {$e$} (vG);
\path[->] (uG) edge[bend left] node[above=1pt] {$e$} (wG);
\path[->] (wG) edge node[below=.5pt] {$g$} (uG);
\path[->] (uG) edge[bend right] node[below=1pt] {$e$} (w'G);

\draw[dashed] (5,.5) ellipse (3mm and 8mm)
node[right=5mm] {$H$};
\end{tikzpicture}
\end{center}
Indeed, since $B_H\setminus B=\{w\}$, for constructing $\overline{\g}$ we first add a vertex $w'$ to $\g$. We then define
\begin{align*}
\overline{r}_\g(f)&:=\overline{A}=A,\\
\overline{r}_\g(e)&:=\overline{\{v,w\}}=\{v,w,w'\}, \mathrm{~ and}\\
\overline{r}_\g(g)&:=\overline{\{u\}}=\{u\}.
\end{align*}
For the source map $\overline{s}_\g$, for example, since $s_\g(f)\in B_H\setminus B$ and $r_\g(f)\in H$, we may define $\overline{s}_\g(f):=w'$. Note that the range of each edge emitted by $w'$ belongs to $H$.
\end{ex}

As usual, we write $\overline{\g}^0$ for the algebra generated by the elements of $\overline{G}^0 \cup \{\overline{r}_\g(e):e\in \overline{\g}^1\}$. Note that $\overline{A}=A$ for every $A\in H$, and hence, $H$ would be a saturated hereditary subset of $\overline{\g}^0$ as well. Moreover, the set of breaking vertices of $H$ in $\overline \g$ coincides with $B$ (meaning $B_H^{\overline{\g}}=B$).

\begin{rem}\label{rem3.2}
Suppose that $C^*(\g)$ is generated by a Cuntz-Krieger $\g$-family $\{s_e,p_A: A\in \go,e\in\gl\}$. If a family $M=\{S_e,P_v,P_{\overline{A}}: v\in G^0,A\in \g^0, e\in \overline{\g}^1\}$ in a $C^*$-algebra $X$ satisfies relations (UA1)-(UA4) in Definition \ref{defn2.3}, we may generate a Cuntz-Krieger $\overline{\g}$-family $N=\{S_e,P_A:A\in \overline{\g}^0,e\in \overline{\g}^1\}$ in $X$. For this, since $\overline{\g}^0$ is the algebra generated by $\{v,w',\overline{r}_\g(e): v\in G^0,w\in B_H\setminus B, e\in \overline{\g}^1\}$, it suffices to define
\begin{align*}
P_{A\cap B}&:=P_A P_B\\
P_{A\cup B}&:=P_A+P_B-P_A P_B\\
P_{A\setminus B}&:=P_A-P_A P_B
\end{align*}
and generate projections $P_A$ for all $A\in \overline{\g}^0$. Then $N$ is a Cuntz-Krieger $\overline{\g}$-family in $X$, and the $C^*$-subalgebras generated by $M$ and $N$ coincide.
\end{rem}

\begin{prop}\label{prop3.2}
Let $\g$ be an ultragraph, and let $(H,B)$ be an admissible pair in $\g$. If $\overline{\g}$ is the extended ultragraph as above, then $C^*(\g)\cong C^*(\overline{\g})$.
\end{prop}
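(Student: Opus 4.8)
The plan is to invoke the universal properties of $C^*(\g)$ and $C^*(\overline{\g})$ to produce a pair of mutually inverse $*$-homomorphisms. Fix a universal Cuntz-Krieger $\g$-family $\{s_e,p_A\}$ generating $C^*(\g)$ and a universal Cuntz-Krieger $\overline{\g}$-family $\{t_e,q_A\}$ generating $C^*(\overline{\g})$. The isomorphism will follow once I exhibit, in each algebra, a family satisfying the relations for the \emph{other} ultragraph and check that the resulting two homomorphisms are inverse to each other on generators.

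First I would realize a Cuntz-Krieger $\overline{\g}$-family inside $C^*(\g)$. By Remark \ref{rem3.2} it suffices to assign data to the generators $\overline{\g}^1$, $G^0$ and $\{w':w\in B_H\setminus B\}$ of $\overline{\g}^0$ and to verify (UA1)--(UA4). The natural choice is $S_e:=s_e$, $P_v:=p_v$ for $v\notin B_H\setminus B$, and, for each $w\in B_H\setminus B$, the \emph{splitting}
\[
P_w:=p_w-p_w^H=\sum_{s_\g(e)=w,\ r_\g(e)\notin H}s_e s_e^*,\qquad P_{w'}:=p_w^H .
\]
These two projections are orthogonal with sum $p_w$, so propagating through the algebra operations of Remark \ref{rem3.2} gives the un-augmented value $P_A=p_A-\sum_{w\in A\cap(B_H\setminus B)}p_w^H$ for $A\in\go$ and hence $P_{\overline{A}}=P_A+\sum_{w\in A\cap(B_H\setminus B)}P_{w'}=p_A$ for the augmented sets $\overline{A}$. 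In particular $P_{\overline{r}_\g(e)}=P_{\overline{r_\g(e)}}=p_{r_\g(e)}$, which is exactly what (UA2) requires. After checking the relations, the universal property yields $\phi\colon C^*(\overline{\g})\to C^*(\g)$ with $\phi(t_e)=s_e$, $\phi(q_{\overline{A}})=p_A$ and $\phi(q_{w'})=p_w^H$. Conversely, the assignments $\widetilde S_e:=t_e$ and $\widetilde P_A:=q_{\overline{A}}$ ($A\in\go$) define a Cuntz-Krieger $\g$-family in $C^*(\overline{\g})$: here (UA3) holds because $\overline{s}_\g(e)$ is one of $s_\g(e),(s_\g(e))'$, both contained in $\overline{\{s_\g(e)\}}=\overline{s_\g(e)}$, so $t_e t_e^*\le q_{\overline{s}_\g(e)}\le q_{\overline{s_\g(e)}}=\widetilde P_{s_\g(e)}$. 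This gives $\psi\colon C^*(\g)\to C^*(\overline{\g})$.

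The heart of the verification, and the step I expect to be most delicate, is (UA4) at the split vertices. Because $w\in B_H$, only finitely many edges out of $w$ have range outside $H$, so in $\overline{\g}$ the vertex $w$ becomes a \emph{finite} emitter whose out-edges are precisely $\{e:s_\g(e)=w,\ r_\g(e)\notin H\}$; this forces $P_w=\sum s_e s_e^*$, matching the definition above, and is exactly where the breaking-vertex hypothesis enters. Meanwhile $w'$ emits the remaining (infinitely many) edges, all with range in $H$, so no summation relation is imposed on $P_{w'}=p_w^H$ and one only needs $s_e s_e^*\le p_w^H$ for those edges, which follows from orthogonality of ranges together with $s_e s_e^*\le p_w$. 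The remaining bookkeeping distinguishing $P_{\overline{A}}=p_A$ from the un-augmented $P_A$ must be tracked carefully through (UA1)--(UA2) at the split vertices, but is routine once the splitting is fixed; finite emitters $v$ of $\g$ lie outside $B_H$, so $\overline{v}=v$ and their (UA4) relation transfers verbatim.

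Finally I would confirm that $\phi$ and $\psi$ are mutually inverse by evaluating on the generating families. The composite $\psi\circ\phi$ fixes $t_e$ and $q_{\overline{A}}$ immediately, and on $q_{w'}$ one computes $\psi(p_w^H)=\psi\bigl(p_w-\sum_{s_\g(e)=w,\,r_\g(e)\notin H}s_e s_e^*\bigr)=(q_w+q_{w'})-\sum_{s_\g(e)=w,\,r_\g(e)\notin H}t_e t_e^*=q_{w'}$, using that $w$ is a finite emitter in $\overline{\g}$ so that (UA4) gives $\sum t_e t_e^*=q_w$. Symmetrically $\phi\circ\psi$ fixes $s_e$ and $p_A$. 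Hence $\phi$ and $\psi$ are inverse $*$-isomorphisms, and $C^*(\g)\cong C^*(\overline{\g})$.
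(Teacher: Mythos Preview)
Your proof is correct. You construct exactly the same $*$-homomorphism $\phi\colon C^*(\overline{\g})\to C^*(\g)$ as the paper does, via the same splitting $P_w=\sum_{s_\g(e)=w,\,r_\g(e)\notin H}s_es_e^*$ and $P_{w'}=p_w^H$. The difference is only in how you establish that $\phi$ is bijective: the paper observes that all vertex projections in the target family are nonzero and invokes the gauge-invariant uniqueness theorem for ultragraph $C^*$-algebras \cite[Theorem 6.8]{tom} to get injectivity (surjectivity being immediate since the family generates $C^*(\g)$), whereas you build an explicit inverse $\psi$ from a Cuntz-Krieger $\g$-family $\widetilde P_A:=q_{\overline A}$, $\widetilde S_e:=t_e$ in $C^*(\overline{\g})$ and verify $\phi\circ\psi$ and $\psi\circ\phi$ on generators. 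Your route is more elementary---it avoids appealing to the uniqueness theorem---at the cost of an extra round of relation-checking (in particular the observation that $A\mapsto\overline A$ respects $\cap$, $\cup$, $\setminus$, and that finite emitters of $\g$ lie outside $B_H$ so their (UA4) relation transfers verbatim). The paper's route is shorter but relies on an external result. Both arguments produce the same explicit isomorphism.
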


\begin{proof}
Suppose that $C^*(\g)=C^*(t_e,q_A)$ and $C^*(\overline{\g})=C^* (s_e,p_C)$. If we define
$$\begin{array}{ll}
  P_v:=q_v & \mathrm{for}~~ v\in G^0\setminus (B_H\setminus B) \\
  P_w:=\sum_{\substack{s_\g(e)=w \\ r_\g(e)\notin H}}t_et_e^* &  \mathrm{for}~~ w\in B_H\setminus B \\
  P_{w'}:=q_w-\sum_{\substack{s_\g(e)=w \\ r_\g(e)\notin H}}t_et_e^* & \mathrm{for}~~ w\in B_H\setminus B \\
  P_{\overline{A}}:=q_A & \mathrm{for}~~ \overline{A}\in \overline{\g}^0 \\
  S_e:=t_e & \mathrm{for}~~ e\in \overline{\g}^1,
\end{array}
$$
then, by Remark \ref{rem3.2}, the family
$$\left\{P_v,P_w,P_{w'},P_{\overline{A}},S_e: v\in G^0\setminus (B_H\setminus B),~w\in B_H\setminus B,~ \overline{A}\in \overline{\g}^0,~ e\in \overline{\g}^1\right\}$$
induces a Cuntz-Krieger $\overline{\g}$-family in $C^*(\g)$. Since all vertex projections of this family are nonzero (which follows all set projections $P_A$ are nonzero for $\emptyset\neq A\in\overline{\g}^0$), the gauge-invariant uniqueness theorem \cite[Theorem 6.8]{tom} implies that the $*$-homomorphism $\phi:C^*(\overline{\g})\rightarrow C^*(\g)$ with $\phi(p_*)=P_*$ and $\phi(s_*)=S_*$ is injective. On the other hand, the family generates $C^*(\g)$, and hence, $\phi$ is an isomorphism.
\end{proof}

To define a quotient ultragraph $\gh$, we use the following equivalent relation on $\overline{\g}$.

\begin{defn}\label{defn3.3}
Suppose that $(H,B)$ is an admissible pair in $\g$, and that $\overline{\g}$ is the extended ultragraph as above. We define the relation $\sim$ on $\overline{\g}^0$ by
$$A\sim B ~~ \Longleftrightarrow ~~ \exists V\in H ~\mathrm{such ~ that} ~ A\cup V=B\cup V.$$
Note that $A\sim B$ if and only if both sets $A\setminus B$ and $B\setminus A$ belong to $H$.
\end{defn}

The following is an analogous version of \cite[Proposition 3.6]{jeo}.

\begin{lem}\label{lem3.4}
The relation $\sim$ is an equivalent relation on $\overline{\g}^0$. Furthermore, the operations
$$[A]\cup[B]:=[A\cup B], ~ [A]\cap [B]:=[A\cap B], ~ \mathrm{and}~ [A]\setminus [B]:= [A\setminus B]$$
are well-defined on the equivalent classes $\{[A]: A\in \overline{\g}^0\}$.
\end{lem}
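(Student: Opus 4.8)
The plan is to verify in turn that $\sim$ is reflexive, symmetric, and transitive, and then to check that each of the three set operations descends to the equivalence classes. Throughout I will rely on the reformulation noted in Definition \ref{defn3.3}: $A\sim B$ if and only if both $A\setminus B\in H$ and $B\setminus A\in H$, together with the fact that $H$ is a hereditary subalgebra-like set, closed under finite unions (H2) and under passage to subsets within $\overline{\g}^0$ (H3). Reflexivity is immediate since $A\setminus A=\emptyset\in H$ (recall $p_\emptyset=0$ forces $\emptyset\in H$), and symmetry is built into the formulation because the condition is manifestly symmetric in $A$ and $B$.

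For transitivity, suppose $A\sim B$ and $B\sim C$, so that $A\setminus B$, $B\setminus A$, $B\setminus C$, $C\setminus B$ all lie in $H$. I would then bound $A\setminus C$ by observing the set-theoretic containment $A\setminus C\subseteq(A\setminus B)\cup(B\setminus C)$; since the right-hand side is a union of two elements of $H$, property (H2) places it in $H$, and then (H3) gives $A\setminus C\in H$. The symmetric computation yields $C\setminus A\in H$, whence $A\sim C$. This is the cleanest part of the argument.

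The substantive work is well-definedness of the three operations, and here the union is the representative case. I need to show that if $A\sim A'$ and $B\sim B'$, then $A\cup B\sim A'\cup B'$, i.e. that $(A\cup B)\setminus(A'\cup B')\in H$ and its mirror image. The key set-theoretic inequality is $(A\cup B)\setminus(A'\cup B')\subseteq(A\setminus A')\cup(B\setminus B')$: any point in $A\cup B$ but outside $A'\cup B'$ lies in $A$ or $B$ while lying outside both $A'$ and $B'$, so it is in $A\setminus A'$ or in $B\setminus B'$. Again (H2) and (H3) finish the containment. For intersection I would use $(A\cap B)\setminus(A'\cap B')\subseteq(A\setminus A')\cup(B\setminus B')$, and for set difference $(A\setminus B)\setminus(A'\setminus B')\subseteq(A\setminus A')\cup(B'\setminus B)$; each follows from a short point-chase, and in every case the right-hand side is a union of two members of $H$.

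The main obstacle, such as it is, is purely bookkeeping rather than conceptual: one must confirm that all the intermediate sets actually belong to the algebra $\overline{\g}^0$ so that the hereditary-set axioms (H2) and (H3) apply to them, and one must be careful that the containments are stated with the correct complementation (note the asymmetry $B'\setminus B$ appearing in the set-difference case). Since $\overline{\g}^0$ is closed under $\cap$, $\cup$, and $\setminus$ by construction, every set written above lies in $\overline{\g}^0$, so no difficulty arises there. I expect the proof to consist almost entirely of these elementary containments combined with the closure properties of $H$, with transitivity and the union case carrying the representative ideas and the remaining two operations handled by the analogous point-chases.
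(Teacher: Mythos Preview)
Your proposal is correct: the equivalence-relation axioms and the well-definedness of $\cup$, $\cap$, $\setminus$ all follow from the elementary set-theoretic containments you wrote down, combined with properties (H2) and (H3) of $H$; the point about every intermediate set lying in the algebra $\overline{\g}^0$ is also well taken. The paper itself does not supply a proof of this lemma at all---it simply cites the analogous \cite[Proposition 3.6]{jeo} and moves on---so your argument is in fact more detailed than what appears in the text.
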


\begin{defn}
Let $\g$ be an ultragraph, let $(H,B)$ be an admissible pair in $\g$, and consider the equivalent relation of Definition \ref{defn3.3} on the extended ultragraph $\overline{\g}=(\overline{G}^0,\overline{\g}^1,\overline{r}_\g,\overline{s}_\g)$. The \emph{quotient ultragraph of $\g$ by $(H,B)$} is the quintuple $\gh=(\pGo, \pgo, \pgl,r,s)$, where
\begin{align*}
\Phi(G^0)&:=\left\{[v]:v\in G^0\setminus H\right\}\cup \left\{[w']:w\in B_H\setminus B\right\},\\
\Phi(\go)&:=\left\{[A]:A\in \overline{\g}^0 \right\},\\
\Phi(\gl)&:=\left\{e\in \overline{\g}^1:\overline{r}_\g(e)\notin H \right\},
\end{align*}
and $r:\pgl \rightarrow \pgo$, $s:\pgl \rightarrow \pGo$ are the range and source maps defined by
$$r(e)=[\overline{r}_\g(e)] \hspace{5mm} \mathrm{and} \hspace{5mm} s(e):=[\overline{s}_\g(e)].$$
We refer to $\pGo$ as the vertices of $\gh$.
\end{defn}

\begin{rem}
Lemma \ref{lem3.4} implies that $\pgo$ is the smallest algebra containing
$$\left\{[v],[w']:v\in G^0\setminus H, w\in B_H\setminus B\right\}\cup \left\{[\overline{r}_\g(e)]: e\in \overline{\g}^1\right\}.$$
\end{rem}
{\bf Notation.}\begin{enumerate}
\item For every vertex $v\in \overline{\g}^0\setminus H$, we usually denote $[v]$ instead of $[\{v\}]$.
\item For $A,B\in\overline{\g}^0$, we write $[A]\subseteq [B]$ whenever $[A]\cap [B]=[A]$.
\item Through the paper, we will denote the range and the source maps of $\g$ by $r_\g ,s_\g$, those of $\overline{\g}$ by $\overline{r}_\g,\overline{s}_\g$, and those of $\gh$ by $r,s$.
\end{enumerate}

Now we introduce representations of quotient ultragraphs and their relative $C^*$-algebras.

\begin{defn}\label{defn3.8}
Let $\gh$ be a quotient ultragraph. A \emph{representation of $\gh$} is a set of partial isometries $\{T_e:e\in \pgl\}$ and a set of projections $\{Q_{[A]}: [A]\in \pgo\}$ which satisfy the following relations:
\begin{itemize}
\item[(QA1)]{$Q_{[\emptyset]}=0$, $Q_{[A\cap B]}=Q_{[A]} Q_{[B]}$, and $Q_{[A\cup B]}=Q_{[A]} +Q_{[B]}-Q_{[A\cap B]}$.}
\item[(QA2)]{$T_e^* T_e=Q_{r(e)}$ and $T_e^* T_f=0$ when $e\neq f$.}
\item[(QA3)]{$T_e T_e^*\leq Q_{s(e)}$.}
\item[(QA4)]{$Q_{[v]}=\sum_{s(e)=[v]}T_e T_e^*$, whenever $0<|s^{-1}([v])|<\infty$.}
\end{itemize}
We denote by $C^*(\g/(H,B))$ the universal $C^*$-algebra generated by a representation $\{t_e,q_{[A]}:[A]\in \pgo,e\in \pgl\}$ which exists by Theorem \ref{thm3.11} below.
\end{defn}

Note that if $\alpha=e_1\ldots e_n$ is a path in $\overline{\g}$ and $\overline{r}_\g(\alpha)\notin H$, then the hereditary property of $H$ yields $\overline{r}_\g(e_i)\notin H$, and so $e_i\in \pgl$ for all $1\leq i\leq n$. In this case, we denote $t_\alpha:=t_{e_1}\ldots t_{e_n}$. Moreover, we define
$$(\gh)^*:=\left\{[A]: [A]\neq [\emptyset]\right\}\cup \left\{\alpha\in\overline{\g}^*: r(\alpha)\neq [\emptyset]\right\}$$
as the set of finite paths in $\gh$ and we can extend the maps $s,r$ on $(\gh)^*$ by setting
$$s([A]):=r([A]):=[A] ~~~ \mathrm{and} ~~~ s(\alpha):=s(e_1),~ r(\alpha):=r(e_n). $$

The proof of next lemma is similar to the arguments of \cite[Lemmas 2.8 and 2.9]{tom}.

\begin{lem}\label{lem3.10}
Let $\gh$ be a quotient ultragraph and let $\{T_e,Q_{[A]}\}$ be a representation of $\gh$. Then any nonzero word in $T_e$, $Q_{[A]}$, and $T_f^*$ may be written as a finite linear combination of the forms $T_\alpha Q_{[A]} T_\beta^*$ for $\alpha,\beta\in (\gh)^*$ and $[A]\in \pgo$ with $[A]\cap r(\alpha)\cap r(\beta)\neq [\emptyset]$.
\end{lem}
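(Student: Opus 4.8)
The claim (Lemma \ref{lem3.10}) is that in any representation $\{T_e, Q_{[A]}\}$ of a quotient ultragraph $\gh$, every nonzero word in the generators $T_e$, $Q_{[A]}$, and the adjoints $T_f^*$ collapses to a finite linear combination of terms of the standard form $T_\alpha Q_{[A]} T_\beta^*$ with $\alpha,\beta \in (\gh)^*$ and $[A]\cap r(\alpha)\cap r(\beta)\neq [\emptyset]$.

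Let me sketch the proof.

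The proof should be a word-reduction argument, mirroring the original ultragraph case (\cite[Lemmas 2.8, 2.9]{tom}) but carried out with the equivalence-class relations (QA1)--(QA4).

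First, I would record the basic "commutation and multiplication" rules that the relations (QA1)--(QA4) force. From (QA1) the projections $\{Q_{[A]}\}$ form a commutative family with $Q_{[A]}Q_{[B]} = Q_{[A\cap B]}$, so any product of vertex projections reduces to a single $Q_{[A]}$. From (QA2), $T_e^* T_f = \delta_{e,f} Q_{r(e)}$, and $T_e^* T_e = Q_{r(e)}$. From (QA3), $T_e T_e^* \leq Q_{s(e)}$, which gives $Q_{s(e)} T_e = T_e$ and hence, using (QA3) together with the source/range bookkeeping, the key commutation relation $Q_{[A]} T_e = T_e Q_{[A]\cap r(e)}$ (and its adjoint). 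I would prove this last identity first, since it is what lets one push all projections to the middle of a word.

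Next comes the reduction itself. Any word in $T_e, Q_{[A]}, T_f^*$ is a product of letters each of which is one of these three types. The plan is: (1) use $T_e^* T_f = \delta_{e,f}Q_{r(e)}$ to annihilate or collapse every adjacent pair consisting of a starred letter immediately followed by an unstarred one, so that after this step the word has the shape (block of unstarred $T$'s and $Q$'s)(block of starred $T$'s); (2) use the commutation identity $Q_{[A]}T_e = T_e Q_{[A]\cap r(e)}$ to move every projection rightward through the initial block of $T_e$'s, and symmetrically pull projections leftward through the trailing block of $T_f^*$'s, gathering all projections into a single $Q_{[A]}$ in the middle via (QA1); (3) the surviving unstarred prefix is then $T_{e_1}\cdots T_{e_m} = T_\alpha$ for a path $\alpha$, and the starred suffix is $T_\beta^*$, yielding exactly $T_\alpha Q_{[A]} T_\beta^*$. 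The path conditions $\alpha,\beta \in (\gh)^*$ and the nonvanishing condition $[A]\cap r(\alpha)\cap r(\beta)\neq[\emptyset]$ fall out: consecutive $T$'s can be nonzero only when the range/source of successive edges match up (so $\alpha,\beta$ are genuine paths with $r(\alpha),r(\beta)\neq[\emptyset]$), and a nonzero middle projection forces the intersection to be nonempty, since otherwise $Q_{[A]}$ times the relevant range projections would be $Q_{[\emptyset]}=0$.

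\textbf{Main obstacle.} The genuinely delicate step is step (2), the commutation identity $Q_{[A]} T_e = T_e Q_{[A]\cap r(e)}$, because unlike in an ordinary graph the vertex projections are indexed by an \emph{algebra} of sets rather than by single vertices, and the index $[A]$ is an equivalence class modulo $H$. I would establish it by first proving $Q_{s(e)}T_e = T_e$ and $T_e = T_e Q_{r(e)}$ (immediate from (QA2)--(QA3)), then handling a general $[A]$ by decomposing $[A]$ relative to $[s(e)]$ using (QA1): writing $Q_{[A]} = Q_{[A]\cap s(e)} + Q_{[A]\setminus s(e)}$ and noting $Q_{[A]\setminus s(e)}T_e = 0$ since $Q_{[A]\setminus s(e)}Q_{s(e)} = Q_{[\emptyset]} = 0$ dominates $Q_{[A]\setminus s(e)}T_eT_e^*$. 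The well-definedness of these set operations on equivalence classes, already granted by Lemma \ref{lem3.4}, is exactly what is needed to keep the computation coherent; and the fact that $T_eT_e^* \le Q_{s(e)}$ (not equality) is why one argues through domination rather than a direct substitution. Once this single identity is in hand, the remainder of the argument is the routine bookkeeping of the $\g$-case and I would not belabor it.
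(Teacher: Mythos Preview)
Your overall plan is exactly what the paper intends: it gives no proof beyond the remark that the argument is the same as \cite[Lemmas~2.8 and 2.9]{tom}, and your word-reduction outline is precisely that argument adapted to the relations (QA1)--(QA4).

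There is, however, a genuine error in the formula you single out as ``the key commutation relation.'' The identity $Q_{[A]}T_e = T_e Q_{[A]\cap r(e)}$ is false in general. Take any edge $e$ with $s(e)\subseteq[A]$ but $r(e)\not\subseteq[A]$ (easy to arrange, since ranges of edges are arbitrary sets); then the left side is $T_e$ while the right side is $T_eQ_{[A]\cap r(e)}$ with $[A]\cap r(e)\subsetneq r(e)$, and these differ whenever $Q_{r(e)\setminus[A]}\ne 0$. Your own derivation does not prove the stated formula either: decomposing $[A]$ against the \emph{source} $s(e)$ yields $Q_{[A]}T_e=Q_{[A]\cap s(e)}T_e$, and since $s(e)$ is a single vertex class this is either $T_e$ (when $s(e)\subseteq[A]$) or $0$ --- nothing in that computation produces the factor $Q_{[A]\cap r(e)}$ on the right. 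The identity you may be conflating it with is the (correct and immediate) absorption rule $T_eQ_{[A]}=T_eQ_{[A]\cap r(e)}$, which moves a projection in the opposite direction.

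Fortunately the reduction survives once you use the correct dichotomy $Q_{[A]}T_e\in\{T_e,0\}$ and its adjoint $T_e^{*}Q_{[A]}\in\{T_e^{*},0\}$. In your step~(2), rather than ``pushing projections rightward through the $T_e$'s,'' each intermediate projection in the unstarred block is simply absorbed (if the next source lies under it) or annihilates the word, leaving $T_\alpha$ followed by at most one surviving projection on its right; the starred block is handled symmetrically, and the two residual projections merge via (QA1). So the repair is local, but the commutation formula as you have written it should not stand.
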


\begin{thm}\label{thm3.11}
Let $\gh$ be a quotient ultragraph. Then there exists a (unique up to isomorphism) $C^*$-algebra $C^*(\gh)$ generated by a universal representation $\{t_e,q_{[A]}: [A]\in \pgo,e\in \pgl\}$ for $\gh$. Furthermore, all the $t_e$'s and $q_{[A]}$'s are nonzero for $[\emptyset]\neq [A]\in \pgo$ and $ e\in \pgl$.
\end{thm}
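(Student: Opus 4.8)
The plan is to argue in two stages: first build the universal object by the standard $C^*$-seminorm construction, and then prove that the canonical generators survive by exhibiting one representation in which they are all nonzero.

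For existence and uniqueness I would form the free $*$-algebra $\mathcal{A}$ on symbols $\{t_e : e \in \pgl\} \cup \{q_{[A]} : [A] \in \pgo\}$ and quotient by the $*$-ideal generated by the relations (QA1)--(QA4). In any representation $\{T_e, Q_{[A]}\}$ of $\gh$ each $T_e$ is a partial isometry and each $Q_{[A]}$ a projection, hence a contraction; and by Lemma \ref{lem3.10} every element of $\mathcal{A}$ is sent to a finite linear combination of words $T_\alpha Q_{[A]} T_\beta^*$, each of norm at most $1$. Therefore $\|x\|_u := \sup_\pi \|\pi(x)\|$, the supremum over all representations $\pi$, is finite and is a $C^*$-seminorm on $\mathcal{A}$. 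Completing $\mathcal{A}/\{x : \|x\|_u = 0\}$ in $\|\cdot\|_u$ produces a $C^*$-algebra $C^*(\gh)$ carrying a representation $\{t_e, q_{[A]}\}$ with the required universal property, and uniqueness up to isomorphism is immediate from that property.

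It remains to produce a representation in which every $q_{[A]}$ with $[A] \neq [\emptyset]$ and every $t_e$ is nonzero, since $\|\cdot\|_u$ dominates any such representation. For this I would pass to the extended ultragraph: let $\{s_e, p_C\}$ generate $C^*(\overline{\g})$, let $I$ be the gauge invariant ideal of $C^*(\overline{\g})$ attached to the admissible pair $(H,B)$ in $\overline{\g}$ (recall $H$ is saturated hereditary in $\overline{\g}^0$ with $B_H^{\overline{\g}}=B$), and let $\rho : C^*(\overline{\g}) \to C^*(\overline{\g})/I$ be the quotient map. Setting $T_e := \rho(s_e)$ for $e \in \pgl$ and $Q_{[A]} := \rho(p_A)$, the assignment $Q_{[A]}$ is well-defined on $\sim$-classes because $A \sim A'$ forces $A \setminus A', A' \setminus A \in H$, so $p_A - p_{A'} \in I$ by the description $H = \{C : p_C \in I\}$ of \cite[Theorem 6.12]{kat3}. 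Relations (QA1)--(QA3) descend routinely from (UA1)--(UA3) for $\overline{\g}$, using that $s_e s_e^* \in I$ whenever $\overline{r}_\g(e) \in H$ (since then $s_e^* s_e = p_{\overline{r}_\g(e)} \in I$).

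The delicate point, and what I expect to be the main obstacle, is verifying (QA4) for every $[v] \in \pGo$ with $0 < |s^{-1}([v])| < \infty$, which requires a case analysis governed by the breaking vertices. When $v$ is regular in $\overline{\g}$ (this includes the vertices $w \in B_H \setminus B$, whose range-in-$H$ edges have been redirected to $w'$), relation (UA4) for $\overline{\g}$ descends after discarding the terms $s_e s_e^*$ with $\overline{r}_\g(e) \in H$, which lie in $I$; for the added vertices $w'$ one checks $s^{-1}([w']) = \emptyset$ in $\gh$, so (QA4) is vacuous and $Q_{[w']} = \rho(p_{w'}) = \rho(p_w^H) \neq 0$ exactly because $w \notin B$. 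The genuinely quotient-specific case is $w \in B$: here $w$ remains an infinite emitter in $\overline{\g}$, so (UA4) is unavailable, but $p_w^H \in I$ by the definition of the ideal, and this is precisely the identity $Q_{[w]} = \sum_{s(e)=[w]} T_e T_e^*$ in the quotient. Finally, $Q_{[A]} = \rho(p_A) \neq 0$ if and only if $p_A \notin I$, if and only if $A \notin H$, if and only if $[A] \neq [\emptyset]$; and each $T_e$ is nonzero because $T_e^* T_e = Q_{r(e)} \neq 0$ as $\overline{r}_\g(e) \notin H$. Together with the universal property, this shows all generators $t_e$ and $q_{[A]}$ of $C^*(\gh)$ are nonzero.
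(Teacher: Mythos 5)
Your proposal is correct and follows essentially the same route as the paper: the universal $C^*$-algebra is built by the standard free-$*$-algebra/sup-seminorm construction (bounded via Lemma \ref{lem3.10}), and nonvanishing of the generators is witnessed by the representation $T_e:=s_e+I_{(H,B)}$, $Q_{[A]}:=p_A+I_{(H,B)}$ in $C^*(\overline{\g})/I_{(H,B)}$, with $Q_{[A]}\neq 0$ read off from the description $H=\{A:p_A\in I_{(H,B)}\}$ of \cite[Theorem 6.12]{kat3}. The only difference is that you spell out the verification of (QA4) (the case analysis over regular vertices, the sinks $[w']$, and $w\in B$ via $p_w^H\in I_{(H,B)}$), which the paper leaves implicit.
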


\begin{proof}
By a standard argument similar to the proof of \cite[Theorem 2.11]{tom}, we may construct such universal $C^*$-algebra $C^*(\gh)$. Note that the universality implies that $C^*(\gh)$ is unique up to isomorphism. To show the last statement, we generate an appropriate representation for $\gh$ as follows. Suppose $C^*(\overline{\g})=C^*(s_e,p_A)$ and consider $I_{(H,B)}$ as an ideal of $C^*(\overline{\g})$ by the isomorphism in Proposition \ref{prop3.2}. If we define
$$\left\{
    \begin{array}{ll}
      Q_{[A]}:=p_A+I_{(H,B)} & \mathrm{for~~}[A]\in \pgo \\
      T_{e}:=s_e+I_{(H,B)} & \mathrm{for}~~ e\in \pgl,
    \end{array}
  \right.
$$
then the family $\{T_{e},Q_{[A]}:[A]\in \pgo,e\in \pgl\}$ is a representation for $\gh$ in the quotient $C^*$-algebra $C^*(\overline{\g})/I_{(H,B)}$. Note that the definition of $Q_{[A]}$'s is well-defined. Indeed, if $A_1\cup V=A_2 \cup V$ for some $V\in H$, then $p_{A_1}+p_{V\setminus A_1}=p_{A_2}+p_{V\setminus A_2}$ and hence $p_{A_1}+I_{(H,B)}=p_{A_2}+I_{(H,B)}$ by the facts $V\setminus A_1, V\setminus A_2\in H$.

Moreover, all elements $Q_{[A]}$ and $T_e$ are nonzero for $[\emptyset]\ne[A]\in\pgo$, $e\in\pgl$. In fact, if $Q_{[A]}=0$, then $p_A\in I_{(H,B)}$ and we get $A\in H$ by \cite[Theorem 6.12]{kat3}. Also, since $T_e^* T_e=Q_{r(e)}\ne0$, all partial isometries $T_e$ are nonzero.

Now suppose that $C^*(\gh)$ is generated by the family $\{t_e,q_{[A]}:[A]\in \pgo, ~ e\in \pgl\}$. By the universality of $C^*(\gh)$, there is a $*$-homomorphism $\phi:C^*(\gh)\rightarrow C^*(\overline{\g})/I_{(H,B)}$ such that $\phi(t_e)=T_e$ and $\phi(q_{[A]})=Q_{[A]}$, and thus, all elements $\{t_e,q_{[A]}:[\emptyset]\ne[A]\in \pgo, ~ e\in \pgl\}$ are nonzero.
\end{proof}

Note that, by a routine argument, one may obtain
$$C^*(\gh)=\overline{\mathrm{span}}\big\{t_\alpha q_{[A]}t_\beta^*: \alpha,\beta\in (\gh)^*,~ r(\alpha)\cap [A]\cap r(\beta)\neq [\emptyset]\big\}.$$


\section{Uniqueness Theorems}

After defining the $C^*$-algebra of quotient ultragraphs, in this section, we prove the gauge invariant and the Cuntz-Krieger uniqueness theorems for them. To do this, we approach to a quotient ultragraph $C^*$-algebra by graph $C^*$-algebras and then apply the corresponding uniqueness theorems for graph $C^*$-algebras. This approach is a developed version of the dual graph method of \cite[Section 2]{rae} and \cite[Section 5]{tom} with more complications. In particular, we show that the $C^*$-algebra $C^*(\gh)$ is isomorphic to the quotient $C^*(\g)/I_{(H,B)}$, and the uniqueness theorems may applied for such quotients.

We fix again an ultragraph $\g$, an admissible pair $(H,B)$ in $\g$, and the quotient ultragraph $\gh=(\pGo,\pgo,\pgl,r,s)$.

\begin{defn}
We say that a vertex $[v]\in \pGo$ is a \emph{sink} if $s^{-1}([v])=\emptyset$. If $[v]$ only emits finitely many edges of $\pgl$, $[v]$ is called a \emph{regular vertex}. Any non-regular vertex is called a \emph{singular vertex}. The set of singular vertices in $\pGo$ is denoted by $$\sg:=\big\{[v]\in \pGo: |s^{-1}([v])|=0~ \mathrm{or}~\infty\big\}.$$
\end{defn}

Let $F$ be a finite subset of $\sg \cup \pgl$. Write $F^0:=F\cap \sg$ and $F^1:=F\cap \pgl=\{e_1,\ldots,e_n\}$. We want to construct a special graph $G_F$ such that $C^*(G_F)$ is isomorphic to $C^*(t_e,q_{[v]}:[v]\in F^0,e\in F^1)$. For each $\omega=(\omega_1,\ldots, \omega_n)\in \{0,1\}^n\setminus \{0^n\}$, we write
$$r(\omega):=\bigcap_{\omega_i=1}r(e_i)\setminus \bigcup_{\omega_j=0}r(e_j) \mathrm{~~and~~} R(\omega):=r(\omega)\setminus \bigcup_{[v]\in F^0}[v].$$
Note that $r(\omega)\cap r(\nu)=[\emptyset]$ for distinct $\omega,\nu\in \{0,1\}\setminus \{0^n\}$. If
\begin{multline*}
    \Gamma_0:=\big\{\omega\in \{0,1\}^n\setminus\{0^n\}: \exists [v_1],\ldots,[v_m]\in\pgo $ such  that $\\
 R(\omega)=\bigcup_{i=1}^m[v_i] $ and $ \emptyset\neq s^{-1}([v_i])\subseteq F^1 $ for $ 1\leq i\leq m\big\},
\end{multline*}
we consider the finite set
\[\Gamma:=\left\{\omega\in \{0,1\}^n\setminus \{0^n\}: R(\omega)\neq [\emptyset] \mathrm{~and ~} \omega\notin \Gamma_0 \right\}.\]

Now we define the finite graph $G_F=(G_F^0,G_F^1,r_F,s_F)$ containing the vertices $G_F^0:=F^0 \cup F^1 \cup \Gamma$ and the edges
\begin{align*}
G_F^1:=&\left\{(e,f)\in F^1\times F^1: s(f)\subseteq r(e) \right\}\\
       &\cup \left\{(e,[v])\in F^1\times F^0: [v]\subseteq r(e) \right\}\\
       &\cup \left\{(e,\omega)\in F^1\times \Gamma: \omega_i=1 \mathrm{~~when~~} e=e_i \right\}
\end{align*}
with the source map $s_F(e,f)=s_F(e,[v])=s_F(e,\omega)=e$, and the range map $r_F(e,f)=f$, $r_F(e,[v])=[v]$, $r_F(e,\omega)=\omega$.

\begin{prop}\label{prop4.2}
Let $\gh$ be a quotient ultragraph and let $F$ be a finite subset of $\sg \cup \pgl$. If $C^*(\gh)=C^*(t_e,q_{[A]})$, then the elements
$$\begin{array}{ccc}
    Q_e:=t_et_e^*, & Q_{[v]}:=q_{[v]}(1-\sum_{e\in F^1}t_et_e^*), & Q_\omega:=q_{R(\omega)}(1-\sum_{e\in F^1}t_et_e^*) \\
    T_{(e,f)}:=t_eQ_f, & T_{(e,[v])}:=t_e Q_{[v]}, & T_{(e,\omega)}:=t_e Q_\omega
  \end{array}
$$
form a Cuntz-Krieger $G_F$-family generating the $C^*$-subalgebra $C^*(t_e,q_{[v]}:[v]\in F^0,e\in F^1)$ of $C^*(\gh)$. Moreover, all projections $Q_*$ are nonzero.
\end{prop}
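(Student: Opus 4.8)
The plan is to check directly that the listed elements satisfy the Cuntz--Krieger relations (GA1)--(GA3) of Remark \ref{rem2.4} for $G_F$, then to identify the two subalgebras, and finally to prove nonvanishing of each $Q_\ast$. First I would record the algebraic facts making every $Q_\ast$ a projection. Since $s(e)$ is a singleton class, (QA1) gives $q_{[A]}q_{s(e)}\in\{q_{s(e)},0\}$, and with (QA3) (so $t_et_e^*\le q_{s(e)}$) this shows each $q_{[A]}$ commutes with every $t_et_e^*$, the product being $t_et_e^*$ when $s(e)\subseteq[A]$ and $0$ otherwise; the $t_et_e^*$ are mutually orthogonal by (QA2). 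Hence $1-\sum_{e\in F^1}t_et_e^*$ is a projection commuting with each $q_{[A]}$, so $Q_{[v]}=q_{[v]}-\sum_{e\in F^1,\,s(e)=[v]}t_et_e^*$ and $Q_\omega=q_{R(\omega)}-\sum_{e\in F^1,\,s(e)\subseteq R(\omega)}t_et_e^*$ are projections. Mutual orthogonality of the vertex projections then reduces to $q_{[v]}q_{[v']}=0$, $q_{[v]}q_{R(\omega)}=0$ (as $R(\omega)$ omits all $F^0$-vertices), $q_{R(\omega)}q_{R(\omega')}=0$ for $\omega\ne\omega'$ (disjointness of the $r(\omega)$), together with the fact that $Q_{[v]},Q_\omega\le 1-\sum t_et_e^*$ are orthogonal to every $Q_e=t_et_e^*$. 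For (GA2), all edges have source $e$ and $TT^*=t_eQ_\ast t_e^*\le t_et_e^*=Q_e$; for (GA1), the membership conditions $s(f)\subseteq r(e)$, $[v]\subseteq r(e)$, and $\omega_i=1$ (whence $r(\omega)\subseteq r(e_i)$) each force $q_{r(e)}Q_\ast=Q_\ast$, so $T^*T=Q_\ast q_{r(e)}Q_\ast=Q_\ast=Q_{r_F(\cdot)}$ using $t_e^*t_e=q_{r(e)}$.

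The crux is (GA3), nontrivial only at the vertices $e_i\in F^1$, since vertices in $F^0\cup\Gamma$ emit no edges of $G_F$. The structural input is the partition $r(e_i)=\bigsqcup_{\omega_i=1}r(\omega)$ (disjointness of the $r(\omega)$), refined by $r(\omega)=R(\omega)\sqcup\bigsqcup_{[v]\in F^0,\,[v]\subseteq r(\omega)}[v]$, which yields $q_{r(e_i)}=\sum_{\omega_i=1}q_{R(\omega)}+\sum_{[v]\in F^0,\,[v]\subseteq r(e_i)}q_{[v]}$. I would then compute $\sum_{s_F(\cdot)=e_i}TT^*=t_{e_i}\big(\sum_fQ_f+\sum_{[v]}Q_{[v]}+\sum_\omega Q_\omega\big)t_{e_i}^*$ and show the bracket equals $q_{r(e_i)}$: the $q$-terms reassemble as above, while the $t_gt_g^*$-terms cancel because each $g\in F^1$ with $s(g)\subseteq r(e_i)$ is subtracted exactly once---through the $Q_{[v]}$ sum when $s(g)$ is an $F^0$-vertex, and through a unique $Q_\omega$ sum otherwise (each non-$F^0$ vertex of $r(e_i)$ lies in exactly one $R(\omega)$ with $\omega_i=1$)---matching the positive $\sum_{s(f)\subseteq r(e_i)}t_ft_f^*$ from the $Q_f$. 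Then $t_{e_i}q_{r(e_i)}t_{e_i}^*=t_{e_i}t_{e_i}^*=Q_{e_i}$, which also shows $s_F^{-1}(e_i)\ne\emptyset$, so the relation genuinely applies. This bookkeeping---keeping each edge counted once across the three families---is where I expect the main effort to lie.

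The same computation gives generation. Writing $\mathcal{A}$ for the algebra generated by the $G_F$-family and $\mathcal{B}:=C^*(t_e,q_{[v]}:[v]\in F^0,e\in F^1)$, one has $t_{e_i}=t_{e_i}q_{r(e_i)}=\sum_{s_F(\cdot)=e_i}T_\cdot\in\mathcal{A}$ and $q_{[v]}=Q_{[v]}+\sum_{e\in F^1,\,s(e)=[v]}Q_e\in\mathcal{A}$, so $\mathcal{B}\subseteq\mathcal{A}$. The reverse inclusion is immediate since each generator of $\mathcal{A}$ lies in $\mathcal{B}$, using that $q_{R(\omega)}$ is a polynomial in the $q_{r(e_i)}=t_{e_i}^*t_{e_i}$ and the $q_{[v]}$ via (QA1).

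For nonvanishing, $Q_e=t_et_e^*\ne0$ by Theorem \ref{thm3.11}, and $Q_{[v]}\ne0$: when $[v]$ is an infinite emitter, choosing an edge $e_0\notin F^1$ with $s(e_0)=[v]$ gives $0\ne t_{e_0}t_{e_0}^*\le Q_{[v]}$ (orthogonality of ranges). The delicate case is $Q_\omega$, and here I would argue contrapositively: if $Q_\omega=0$, then $q_{R(\omega)}=\sum_{e\in F^1,\,s(e)\subseteq R(\omega)}t_et_e^*$, and compressing by $q_{[v]}$ for any vertex $[v]\subseteq R(\omega)$ forces $q_{[v]}=\sum_{e\in F^1,\,s(e)=[v]}t_et_e^*$. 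This in turn forces $\emptyset\ne s^{-1}([v])\subseteq F^1$ (an empty sum would give $q_{[v]}=0$, and an edge out of $[v]$ outside $F^1$ would contradict orthogonality of ranges). Since $F^1$ is finite this makes $R(\omega)$ a finite union of such regular vertices, i.e. $\omega\in\Gamma_0$, contradicting $\omega\in\Gamma$. Thus it is precisely the exclusion of $\Gamma_0$ in the definition of $\Gamma$ that guarantees $Q_\omega\ne0$.
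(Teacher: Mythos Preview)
Your overall strategy matches the paper's exactly: verify (GA1)--(GA3), recover the generators, and argue nonvanishing. Two points deserve attention.

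\medskip
\textbf{A gap in the (GA3) bookkeeping.} Your cancellation argument asserts that each $g\in F^1$ with $s(g)\subseteq r(e_i)$ and $s(g)\notin F^0$ is subtracted ``through a unique $Q_\omega$ sum,'' since $s(g)$ lies in exactly one $R(\omega)$ with $\omega_i=1$. That $\omega$ is indeed unique, but it need not lie in $\Gamma$: it may lie in $\Gamma_0$, and then $Q_\omega$ is \emph{not} in your sum. Correspondingly, your ``$q$-terms reassemble'' claim uses the partition $q_{r(e_i)}=\sum_{\omega_i=1}q_{R(\omega)}+\sum_{[v]\in F^0,[v]\subseteq r(e_i)}q_{[v]}$, whereas the bracket only contains $\sum_{\omega\in\Gamma,\,\omega_i=1}q_{R(\omega)}$. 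The missing ingredient is that for $\omega\in\Gamma_0$ one has $q_{R(\omega)}=\sum_{g\in F^1,\,s(g)\subseteq R(\omega)}t_gt_g^*$ by (QA4), so the ``would-be'' $Q_\omega$ vanishes; adding these zero terms restores your bookkeeping. This is precisely what the paper does in its step (iii), where the sum $\sum_{\omega\in\Gamma,\,\omega_i=1}Q_\omega$ is extended to $\sum_{\omega_i=1}q_{R(\omega)}(1-\sum_{e\in F^1}t_et_e^*)$ before assembling. Without this observation your cancellation claim is false as stated.

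\medskip
\textbf{Minor omission for $Q_{[v]}$.} You only treat the case where $[v]\in F^0$ is an infinite emitter. Since $F^0\subseteq\sg$, the vertex $[v]$ could also be a sink, in which case no $e\in F^1$ has $s(e)=[v]$ and $Q_{[v]}=q_{[v]}\ne 0$ directly. This is trivial but should be said.

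\medskip
\textbf{On $Q_\omega\ne 0$.} Your contrapositive argument is correct and in fact tighter than the paper's case analysis. From $q_{R(\omega)}=\sum_{e\in F^1,\,s(e)\subseteq R(\omega)}t_et_e^*\le\sum_{[v]\in V}q_{[v]}$ (where $V$ collects the relevant sources) you get $R(\omega)\subseteq\bigcup V$ via (QA1), hence $R(\omega)=\bigcup V$; combined with your compression argument this forces $\omega\in\Gamma_0$. The paper instead argues directly that either an edge outside $F^1$ or a sink sits under $R(\omega)$, which implicitly uses the same finiteness but is less explicit about why these cases are exhaustive.
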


\begin{proof}
We first note that all the projections $Q_e$, $Q_{[v]}$, and $Q_\omega$ are nonzero. Indeed, each $[v]\in F^0$ is a singular vertex in $\gh$, so $Q_{[v]}$ is nonzero. Also, by definition, for every $\omega\in \Gamma$ we have $\omega\notin \Gamma_0$ and $R(\omega)\neq [\emptyset]$. Hence, for any $\omega\in \Gamma$, if there is an edge $f\in \pgl\setminus F^1$ with $s(f)\subseteq R(\omega)$, then $0\neq t_ft_f^*\leq Q_{\omega}$. If there is a sink $[w]$ such that $[w]\subseteq R(\omega)=r(\omega)\setminus \bigcup F^0$, then $0\neq q_{[w]}\leq q_{R(\omega)}(1-\sum_{e\in F^1}t_et_e^*)=Q_{\omega}$. Thus $Q_{\omega}$ is nonzero in either case. In addition, the projections $Q_e$, $Q_{[v]}$, and $Q_\omega$ are mutually orthogonal because of the factor $1-\sum_{e\in F^1}t_et_e^*$ and the definition of $R(\omega)$.

Now we show the collection $\{T_x,Q_a:a\in G_F^0, x\in G_F^1\}$ is a Cuntz-Krieger $G_F$-family by checking the relations (GA1)-(GA3) in Remark \ref{rem2.4}.

\underline{(GA1)}: Since $Q_{[v]},Q_\omega \leq q_{r(e)}$ for $(e,[v]),(e,\omega)\in G_F^1$, we have
$$T^*_{(e,f)}T_{(e,f)}=Q_f t_e^* t_e Q_f=t_f t_f^* q_{r(e)} t_f t_f^*= t_f q_{r(f)} t_f^*=Q_f,$$
$$T^*_{(e,[v])}T_{(e,[v])}=Q_{[v]}t_e^* t_e Q_{[v]}=Q_{[v]} q_{r(e)} Q_{[v]}=Q_{[v]},$$
and
$$T^*_{(e,\omega)}T_{(e,\omega)}=Q_\omega t_e^* t_e Q_\omega=Q_\omega q_{r(e)} Q_\omega= Q_\omega.$$

\underline{(GA2)}: This relation may be checked similarly.

\underline{(GA3)}: Note that any element of $F^0 \cup \Gamma$ is a sink in $G_F$. So, fix some $e_i\in F^1$ as a vertex of $G_F^0$. Write $q_{F^0}:=\sum_{[v]\in F^0}q_{[v]}$. We compute
\begin{enumerate}[(i)]
\item
$$q_{r(e_i)} \sum_{\substack{f\in F^1 \\ s(f)\subseteq r(e_i)}}Q_f=q_{r(e_i)}\sum_{\substack{f\in F^1 \\ s(f)\subseteq r(e_i)}} t_f t_f^*=q_{r(e_i)}\sum_{f\in F^1}t_f t_f^*;$$
\item
\begin{align*}
q_{r(e_i)}\sum_{\substack {[v]\in F^0,\\ [v]\subseteq r(e_i)}}Q_{[v]}&=q_{r(e_i)}\sum_{[v]\in F^0}q_{[v]}(1-\sum_{e\in F^1}t_e t_e^*)\\
&=q_{r(e_i)} q_{F^0}(1-\sum_{e\in F^1}t_e t_e^*);
\end{align*}
\item
$$\sum_{\omega\in \Gamma,\omega_i=1}Q_\omega =\sum_{\omega\in \Gamma,\omega_i=1}q_{R(\omega)}(1-\sum_{e\in F^1}t_e t_e^*)=\sum_{\omega_i=1}q_{R(\omega)}(1-\sum_{e\in F^1}t_e t_e^*),$$
because $\sum_{\omega_i=1}q_{R(\omega)}=q_{r(e_i)}(1-q_{F^0})$.
\end{enumerate}
We can use these relations to get
\begin{align*}
\sum_{s(f)\subseteq r(e_i)}&T_{(e_i,f)}+\sum_{[v]\in F^0,~[v]\subseteq r(e_i)}T_{(e_i,[v])}+ \sum_{\omega\in \Gamma,~\omega_i=1}T_{(e_i,\omega)}\\
&=t_{e_i} \left(q_{r(e_i)}\sum_{e\in F^1}t_et_e^*+q_{r(e_i)} q_{F^0}(\sum_{e\in F^1}t_et_e^*)+ q_{r(e_i)}(1-q_{F^0})(\sum_{e\in F^1}t_et_e^*) \right)\\
&=t_{e_i}q_{r(e_i)}\left(\sum_{e\in F^1}t_et_e^*+(q_{F^0}+1-q_{F^0})(1-\sum_{e\in F^1}t_et_e^*) \right)\\
&=t_{e_i}.
\end{align*}
\begin{flushright}
        (4.1)
\end{flushright}
Now if $e_i$ is not a sink as a vertex in $G_F$ (i.e. $|\{x\in G_F^1:s_F(x)=e_i\}|>0$), we conclude that
\begin{align*}
\sum_{f\in F^1,~s(f)\subseteq r(e_i)}&T_{(e_i,f)}T_{(e_i,f)}^*+\sum_{[v]\in F^0,~[v]\subseteq r(e_i)}T_{(e_i,[v])}T_{(e_i,[v])}^*+ \sum_{\omega\in \Gamma,~\omega_i=1}T_{(e_i,\omega)}T_{(e_i,\omega)}^*\\
&=\sum t_{e_i}Q_f t_{e_i}^*+\sum t_{e_i}Q_{[v]} t_{e_i}^*+\sum t_{e_i}Q_\omega t_{e_i}^*\\
&=t_{e_i}q_{r(e_i)}(\sum Q_f +\sum Q_{[v]}+\sum Q_\omega)t_{e_i}^*\\
&=t_{e_i}t_{e_i}^*=Q_{e_i},
\end{align*}
which establishes the relation (GA3).

Furthermore, equation (4.1) in above says that $t_{e_i}\in C^*(T_*,Q_*)$ for every $e_i\in F^1$. Also, for each $[v]\in F^0$, we have
\begin{align*}
Q_{[v]}+\sum_{e\in F^1, s(e)=[v]} Q_e&=t_{[v]}(1-\sum_{e\in F^1}t_et_e^*)+\sum_{e\in F^1, s(e)=[v]}t_e t_e^*\\
&=t_{[v]}-t_{[v]}\sum_{e\in F^1} t_et_e^*+t_{[v]}\sum_{e\in F^1} t_et_e^*\\
&=t_{[v]}.
\end{align*}
Therefore, the family $\{T_x,Q_a:a\in G_F^0, x\in G_F^1\}$ generates the $C^*$-subalgebra $C^*(\{t_e,q_{[v]}:e\in F^1, [v]\in F^0\})$ of $C^*(\gh)$ and the proof is complete.
\end{proof}

\begin{cor}\label{cor4.3}
If $F$ is a finite subset of $\sg \cup \pgl$, then $C^*(G_F)$ is isometrically isomorphic to the $C^*$-subalgebra of $C^*(\gh)$ generated by $\{t_e,q_{[v]}: [v]\in F^0, e\in F^1\}$.
\end{cor}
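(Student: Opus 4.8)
The plan is to combine the universal property of the graph $C^*$-algebra $C^*(G_F)$ with the gauge-invariant uniqueness theorem for graphs. By Proposition \ref{prop4.2}, the collection $\{T_x, Q_a : a \in G_F^0,\, x \in G_F^1\}$ is a Cuntz-Krieger $G_F$-family in $C^*(\gh)$ generating the subalgebra $C^*(t_e, q_{[v]} : [v]\in F^0,\, e\in F^1)$. Writing $C^*(G_F)=C^*(S_x,P_a)$ for its universal family, the universal property yields a surjective $*$-homomorphism $\pi\colon C^*(G_F)\to C^*(t_e,q_{[v]})$ determined by $\pi(S_x)=T_x$ and $\pi(P_a)=Q_a$. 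Since an injective $*$-homomorphism between $C^*$-algebras is automatically isometric, it suffices to prove that $\pi$ is injective.

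To arrange this I would set up compatible gauge actions on both sides. On $C^*(\gh)$ the family $\{z\, t_e, q_{[A]}\}$ satisfies (QA1)--(QA4) for every $z\in\T$, so Theorem \ref{thm3.11} supplies a strongly continuous action $\gamma\colon\T\to\mathrm{Aut}(C^*(\gh))$ with $\gamma_z(t_e)=z\, t_e$ and $\gamma_z(q_{[A]})=q_{[A]}$. The key observation is that the generators of Proposition \ref{prop4.2} are gauge-homogeneous: each vertex projection $Q_e=t_e t_e^*$, $Q_{[v]}$, and $Q_\omega$ is built from the gauge-fixed elements $q_{[A]}$ and $t_e t_e^*$, so $\gamma_z(Q_a)=Q_a$, whereas each edge element has the form $t_e Q_a$, so $\gamma_z(T_x)=z\, T_x$. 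Consequently the subalgebra $C^*(t_e,q_{[v]})$ is $\gamma$-invariant, and the restriction $\beta:=\gamma|_{C^*(t_e,q_{[v]})}$ intertwines with the canonical gauge action $\delta$ on $C^*(G_F)$; indeed $\beta_z\circ\pi=\pi\circ\delta_z$ holds on the generators $S_x,P_a$ and hence everywhere.

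With equivariance in place I would invoke the gauge-invariant uniqueness theorem for graph $C^*$-algebras \cite{rae}: since $G_F$ is a finite graph, every vertex projection $Q_a$ is nonzero (again Proposition \ref{prop4.2}), and $\pi$ intertwines the gauge actions, the theorem forces $\pi$ to be injective. Combined with the surjectivity already noted, this gives the asserted isometric isomorphism $C^*(G_F)\cong C^*(t_e,q_{[v]}:[v]\in F^0,\,e\in F^1)$. The only genuine work I anticipate is the verification of gauge-homogeneity, that is, confirming that each generator of Proposition \ref{prop4.2} lies in a single gauge degree so that $\gamma$ restricts to the subalgebra and intertwines with $\delta$; this is bookkeeping rather than a delicate argument, and note in particular that I avoid the Cuntz-Krieger uniqueness theorem because $G_F$ need not satisfy condition~(L). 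The conceptual content, namely the nonvanishing of the $Q_a$ and the Cuntz-Krieger relations, has already been established in Proposition \ref{prop4.2}, so no further estimates are required.
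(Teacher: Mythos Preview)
Your proposal is correct and follows essentially the same approach as the paper's own proof: both obtain the surjective $*$-homomorphism from the universal property of $C^*(G_F)$, and then apply the gauge-invariant uniqueness theorem for graph $C^*$-algebras, using the nonvanishing of the $Q_a$ established in Proposition~\ref{prop4.2}, to conclude injectivity. Your explicit verification that the generators $Q_a$ and $T_x$ are gauge-homogeneous (so that the gauge action on $C^*(\gh)$ restricts and intertwines) is a detail the paper leaves implicit, but otherwise the arguments coincide.
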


\begin{proof}
Suppose that $X$ is the $C^*$-subalgebra generated by $\{t_e,q_{[v]}: [v]\in F^0, e\in F^1\}$ and let $\{T_x,Q_a: a\in G_F^0,x\in G_F^1\}$ be the Cuntz-Krieger $G_F$-family in Proposition \ref{prop4.2}. If $C^*(G_F)=C^*(s_x,p_a)$, then there exists a $*$-homomorphism $\phi:C^*(G_F)\rightarrow X$ with $\phi(p_a)=Q_a$ and $\phi(s_x)=T_x$ for every $a\in G_F^0$, $x\in G_F^1$. Since each $Q_a$ is nonzero by Proposition \ref{prop4.2}, the gauge invariant uniqueness theorem implies that $\phi$ is injective. Moreover, the family $\{T_x,Q_a\}$ generates $X$, so $\phi$ is an isomorphism.
\end{proof}

Note that if $F_1\subseteq F_2$ are two finite subsets of $\sg \cup \pgl$ and $X_1,X_2$ are the $C^*$-subalgebras of $C^*(\gh)$ associated to $G_{F_1}$ and $G_{F_2}$, respectively, we then have $X_1\subseteq X_2$ by Proposition \ref{prop4.2}.

\begin{rem}
Using relations (QA1)-(QA4) in Definition \ref{defn3.8}, each $q_{[A]}$ for $[A]\in \pGo$, can be produced by the elements of
$$\{q_{[v]}:[v]\in \sg\} \cup \{t_e:e\in \pgl\}$$
with finitely many operations. So, the $*$-subalgebra of $C^*(\gh)$ generated by
$$\{q_{[v]}:[v]\in \sg\} \cup \{t_e:e\in \pgl\}$$
is dense in $C^*(\gh)$.
\end{rem}

As for graph $C^*$-algebras, we can apply the universal property to have a strongly continuous \emph{gauge action} $\gamma:\mathbb{T}\rightarrow \mathrm{Aut}(C^*(\gh))$ such that
$$\gamma_z(t_e)=zt_e ~~~ \mathrm{and} ~~~ \gamma_z(q_{[A]})=q_{[A]}$$
for every $[A]\in \pgo$, $e\in \pgl$, and $z\in\mathbb{T}$. Now we are ready to prove the uniqueness theorems.

\begin{thm}[The Gauge Invariant Uniqueness Theorem]\label{thm4.5}
Let $\gh$ be a quotient ultragraph and let $\{T_e,Q_{[A]}\}$ be a representation for $\gh$ such that $Q_{[A]}\neq 0$ for $[A]\neq [\emptyset]$. If $\pi_{T,Q}:C^*(\gh)\rightarrow C^*(T_e,Q_{[A]})$ is the $*$-homomorphism satisfying $\pi_{T,Q}(t_e)=T_e$, $\pi_{T,Q}(q_{[A]})=Q_{[A]}$, and there is a strongly continuous action $\beta$ of $\mathbb{T}$ on $C^*(T_e,Q_{[A]})$ such that $\beta_z\circ \pi_{T,Q}=\pi_{T,Q}\circ \gamma_z$ for every $z\in \mathbb{T}$, then $\pi_{T,Q}$ is faithful.
\end{thm}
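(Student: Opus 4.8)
The plan is to follow the standard gauge-invariant uniqueness strategy, but to transfer all the analytic work to the finite graphs $G_F$ constructed in this section, where the corresponding theorem is already available. First I would set up the averaging operator: using the gauge action $\gamma$, define the faithful conditional expectation
\[
\Phi(x) := \int_{\mathbb{T}} \gamma_z(x)\, dz
\]
onto the fixed-point algebra $C^*(\gh)^\gamma$, and likewise define $\Phi^\beta$ on $C^*(T_e,Q_{[A]})$ using the action $\beta$. The intertwining hypothesis $\beta_z \circ \pi_{T,Q} = \pi_{T,Q}\circ \gamma_z$ immediately gives $\Phi^\beta \circ \pi_{T,Q} = \pi_{T,Q}\circ \Phi$. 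Since $\Phi$ is faithful on positive elements, the usual reduction applies: to prove $\pi_{T,Q}$ is injective it suffices to prove that $\pi_{T,Q}$ is isometric (equivalently injective) on the fixed-point algebra $C^*(\gh)^\gamma$.

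Next I would identify the fixed-point algebra as a direct limit of the finite-graph subalgebras. The spanning description at the end of Section 3 shows the core $C^*(\gh)^\gamma$ is the closure of spans of elements $t_\alpha q_{[A]} t_\beta^*$ with $|\alpha| = |\beta|$. The key structural observation is that each such core element lies in one of the subalgebras $X_F = C^*(t_e, q_{[v]} : [v]\in F^0, e\in F^1)$ of Proposition \ref{prop4.2}, for a suitable finite $F \subseteq \sg \cup \pgl$, and that these subalgebras are nested (as noted after Corollary \ref{cor4.3}: $F_1 \subseteq F_2$ implies $X_{F_1} \subseteq X_{F_2}$). Hence $C^*(\gh)^\gamma = \overline{\bigcup_F (X_F)^\gamma}$, and since injectivity passes to direct limits, it suffices to show $\pi_{T,Q}$ restricts to an injection on each $(X_F)^\gamma$, or more simply on each $X_F$.

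The crucial reduction is then that $X_F \cong C^*(G_F)$ by Corollary \ref{cor4.3}. Under this isomorphism, the restriction of $\pi_{T,Q}$ to $X_F$ becomes a $*$-homomorphism out of $C^*(G_F)$ that sends the generators to the images $\pi_{T,Q}(Q_a)$, $\pi_{T,Q}(T_x)$ of the $G_F$-family of Proposition \ref{prop4.2}. Here I would verify that these images again form a Cuntz-Krieger $G_F$-family (a routine check, since $\pi_{T,Q}$ is a $*$-homomorphism) and, most importantly, that the vertex projections $\pi_{T,Q}(Q_a)$ are all nonzero. This last point follows because each $Q_a$ is, by the explicit formulas in Proposition \ref{prop4.2}, a polynomial in the $t_e$ and $q_{[v]}$ dominating some $Q_{[w]}$, $Q_\omega$, or $Q_e$; applying $\pi_{T,Q}$ and using the hypothesis $Q_{[A]}\neq 0$ together with the defining relations of the representation $\{T_e,Q_{[A]}\}$ shows each image is nonzero. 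With nonzero vertex projections in hand, the gauge-invariant uniqueness theorem for the finite graph $C^*(G_F)$ (which also supplies a compatible circle action via $\beta$) yields that $\pi_{T,Q}|_{X_F}$ is faithful.

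The step I expect to be the main obstacle is the nonvanishing of the images $\pi_{T,Q}(Q_\omega)$, since $Q_\omega = q_{R(\omega)}(1 - \sum_{e\in F^1} t_e t_e^*)$ mixes a projection $q_{R(\omega)}$ with the complementary projection $1 - \sum t_e t_e^*$, and one must argue, exactly as in the nonvanishing part of Proposition \ref{prop4.2}, that under any representation $\{T_e,Q_{[A]}\}$ with all $Q_{[A]}\neq 0$ the corresponding combination remains nonzero. The argument there splits into the case where some edge $f\in\pgl\setminus F^1$ has $s(f)\subseteq R(\omega)$ (so $0\neq T_f T_f^* \leq \pi_{T,Q}(Q_\omega)$) and the case where a sink $[w]\subseteq R(\omega)$ forces $0 \neq Q_{[w]} \leq \pi_{T,Q}(Q_\omega)$; reproducing this for the representation $\{T_e,Q_{[A]}\}$ rather than the universal one is the technical heart of the proof, but it is essentially the same computation carried out in Proposition \ref{prop4.2}.
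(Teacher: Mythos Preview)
Your proof is correct and follows essentially the same approach as the paper: reduce via Corollary~\ref{cor4.3} to the finite graph $C^*$-algebras $C^*(G_F)$ and apply the graph gauge-invariant uniqueness theorem there, the main verification being that the image vertex projections $\pi_{T,Q}(Q_a)$ are nonzero (which you handle exactly as in Proposition~\ref{prop4.2}). The only difference is that your conditional-expectation reduction to the fixed-point algebra is unnecessary: the paper notes (in the Remark preceding the theorem) that $\bigcup_n X_{F_n}$ is already dense in all of $C^*(\gh)$, so once $\pi_{T,Q}$ is shown to be injective---hence isometric---on each $X_{F_n}$, it is automatically isometric on $C^*(\gh)$ without passing through the core.
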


\begin{proof}
Select an increasing sequence $\{F_n\}$ of finite subsets of $\sg \cup \pgl$ such that $\cup_{n=1}^\infty F_n=\sg\cup \pgl$. For each $n$, Corollary \ref{cor4.3} gives an isomorphism
$$\pi_n:C^*(G_{F_n})\rightarrow C^*(\{t_e,q_{[v]}:[v]\in F^0,e\in F^1\})$$
that respects the generators. We can apply the gauge invariant uniqueness theorem for graph $C^*$-algebras to see that the homomorphism
$$\pi_{T,Q}\circ \pi_n:C^*(G_{F_n})\rightarrow C^*(T_e,Q_{[A]})$$
is faithful. Hence, for every $F_n$, the restriction of $\pi_{T,Q}$ on the $*$-subalgebra of $C^*(\gh)$ generated by $\{t_e,q_{[v]}:[v]\in F_n^0, e\in F_n^1\}$ is faithful. This turns out that $\pi_{T,Q}$ is injective on the $*$-subalgebra $C^*(t_e,q_{[v]}:[v]\in \sg, e\in \pgl)$. Since, this subalgebra is dense in $C^*(\gh)$, we conclude that $\pi_{T,Q}$ is faithful.
\end{proof}

\begin{prop}\label{prop5.1}
Let $\g$ be an ultragraph. If $(H,B)$ is an admissible pair in $\g$, then $C^*(\gh)\cong C^*(\g)/I_{(H,B)}$.
\end{prop}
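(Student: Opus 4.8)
The plan is to construct mutually inverse $*$-homomorphisms between $C^*(\gh)$ and $C^*(\g)/I_{(H,B)}$, using the gauge invariant uniqueness theorem for $C^*(\gh)$ (Theorem~\ref{thm4.5}) to establish injectivity in one direction. Throughout I would work with $\overline{\g}$ in place of $\g$, since Proposition~\ref{prop3.2} gives $C^*(\g)\cong C^*(\overline{\g})$ and $\overline{\g}$ is the ultragraph on which the quotient construction naturally lives. Write $C^*(\overline{\g})=C^*(s_e,p_A)$ and let $\rho:C^*(\overline{\g})\to C^*(\overline{\g})/I_{(H,B)}$ be the quotient map.

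\textbf{First} I would produce the homomorphism out of $C^*(\gh)$. The family $\{T_e,Q_{[A]}\}$ defined in the proof of Theorem~\ref{thm3.11} by $Q_{[A]}:=p_A+I_{(H,B)}$ and $T_e:=s_e+I_{(H,B)}$ was already shown there to be a representation of $\gh$ inside $C^*(\overline{\g})/I_{(H,B)}$ with all $Q_{[A]}\neq 0$ for $[A]\neq[\emptyset]$. Hence the universal property of $C^*(\gh)$ yields a $*$-homomorphism
$$
\psi:C^*(\gh)\longrightarrow C^*(\overline{\g})/I_{(H,B)},\qquad \psi(q_{[A]})=Q_{[A]},\ \psi(t_e)=T_e.
$$
This $\psi$ is surjective because the elements $\rho(p_A)$ and $\rho(s_e)$ (for $\overline{r}_\g(e)\notin H$, the only edges that survive in the quotient) generate $C^*(\overline{\g})/I_{(H,B)}$. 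To see injectivity I would invoke Theorem~\ref{thm4.5}: the gauge action $\gamma$ on $C^*(\gh)$ must intertwine, via $\psi$, with a $\mathbb{T}$-action on the image. Since $I_{(H,B)}$ is gauge invariant, the gauge action on $C^*(\overline{\g})$ descends to a strongly continuous action $\beta$ on $C^*(\overline{\g})/I_{(H,B)}$ satisfying $\beta_z(\rho(s_e))=z\,\rho(s_e)$ and $\beta_z(\rho(p_A))=\rho(p_A)$; one checks directly that $\beta_z\circ\psi=\psi\circ\gamma_z$ on generators. As all $Q_{[A]}$ are nonzero for $[A]\neq[\emptyset]$, Theorem~\ref{thm4.5} forces $\psi$ to be faithful, so $\psi$ is an isometric isomorphism.

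\textbf{The main obstacle} is verifying that $\psi$ is well defined as a map out of the universal algebra, namely that $\{T_e,Q_{[A]}\}$ genuinely satisfies the quotient-ultragraph relations (QA1)--(QA4) and that $Q_{[A]}$ depends only on the class $[A]$ and not on the representative $A$. The class-independence was already handled in Theorem~\ref{thm3.11}: if $A_1\cup V=A_2\cup V$ with $V\in H$, then $p_{A_1}-p_{A_2}\in I_{(H,B)}$ because $V\setminus A_i\in H$ gives $p_{V\setminus A_i}\in I_{(H,B)}$. The delicate relation is (QA4) at a regular vertex $[v]$: I must confirm that the Cuntz--Krieger relation $p_v=\sum_{s_\g(e)=v}s_es_e^*$ in $C^*(\overline{\g})$, after passing to the quotient, collapses exactly onto the sum over the surviving edges $\{e\in\pgl:s(e)=[v]\}$, using that the terms $s_es_e^*$ with $\overline{r}_\g(e)\in H$ lie in $I_{(H,B)}$ and that breaking-vertex projections $p_w^H$ are killed for $w\in B$. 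This is precisely where the definitions of the extended source map $\overline{s}_\g$ and of $I_{(H,B)}$ must mesh, and it is the step I would write out most carefully.

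\textbf{Finally}, combining $\psi$ with the isomorphism $C^*(\g)\cong C^*(\overline{\g})$ from Proposition~\ref{prop3.2} (which carries $I_{(H,B)}\subseteq C^*(\g)$ onto $I_{(H,B)}\subseteq C^*(\overline{\g})$, as the ideals are generated by corresponding projections) yields
$$
C^*(\gh)\cong C^*(\overline{\g})/I_{(H,B)}\cong C^*(\g)/I_{(H,B)},
$$
which is the claimed isomorphism.
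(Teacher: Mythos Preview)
Your proposal is correct and follows essentially the same route as the paper: define the representation $\{T_e,Q_{[A]}\}$ in $C^*(\overline{\g})/I_{(H,B)}$ (as in Theorem~\ref{thm3.11}), obtain $\psi$ from the universal property, observe surjectivity from the generators, and use the gauge invariant uniqueness theorem (Theorem~\ref{thm4.5}) together with the descended gauge action on the quotient for injectivity. The paper's proof is terser---it does not spell out the intertwining of the gauge actions or the verification of (QA4)---but the argument is the same.
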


\begin{proof}
Using Proposition \ref{prop3.2}, we can consider $I_{(H,B)}$ as an ideal of $C^*(\overline{\g})$. Suppose that $C^*(\overline{\g})=C^*(s_e,p_A)$ and $C^*(\gh)=C^*(t_e,q_{[A]})$. If we define
$$T_e:=s_e+I_{(H,B)} ~~~ \mathrm{and} ~~~ Q_{[A]}:=p_A+I_{(H,B)}$$
for every $[A]\in \pgo$ and $e\in \pgl$, then the family $\{T_e,Q_{[A]}\}$ is a representation for $\gh$ in $C^*(\overline{\g})/I_{(H,B)}$. So, there is a $*$-homomorphism $\phi:C^*(\gh)\rightarrow C^*(\g)/I_{(H,B)}$ such that $\phi(t_e)=T_e$ and $\phi(q_{[A]})=Q_{[A]}$. Moreover, all $Q_{[A]}$ with $[A]\neq [\emptyset]$ are nonzero because $p_A+I_{(H,B)}=I_{(H,B)}$ implies $A\in H$. Then, an application of Theorem \ref{thm4.5} yields that $\phi$ is faithful. On the other hand, the family $\{T_e,Q_{[A]}:[A]\in\pgo,e\in\pgl\}$ generates the quotient $C^*(\g)/I_{(H,B)}$, and hence, $\phi$ is surjective as well. Therefore, $\phi$ is an isomorphism and the result follows.
\end{proof}

To prove a version of Cuntz-Krieger uniqueness theorem, we extend Condition (L) for quotient ultragraphs.

\begin{defn}
We say that $\gh$ satisfies \emph{Condition (L)} if for every loop $\alpha=e_1\ldots e_n$ in $\gh$, at least one of the following conditions holds:
\begin{enumerate}[(i)]
  \item $r(e_i)\neq s(e_{i+1})$ for some $1\leq i\leq n$, where $e_{i+1}:=e_1$ (or equivalently, $r(e_i)\setminus s(e_{i+1})\ne [\emptyset]$).
  \item $\alpha$ has an exit; that means, there exists $f\in \pgl$ such that $s(f)\subseteq r(e_i)$ and $f\neq e_{i+1}$ for some $1\leq i\leq n$.
\end{enumerate}
\end{defn}

\begin{lem}\label{lem4.7}
Let $F$ be a finite subset of $\sg \cup \pgl$. If $\gh$ satisfies Condition (L), so does the graph $G_F$.
\end{lem}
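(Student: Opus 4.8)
The plan is to prove the contrapositive at the level of loops: I would take an arbitrary loop in $G_F$, trace it back to a structure in $\gh$, and show that if $G_F$ fails Condition (L) (i.e. the loop has no exit in $G_F$), then the corresponding loop in $\gh$ must also fail both clauses (i) and (ii). First I would analyze what a loop in $G_F$ looks like. Recall the vertices of $G_F$ are $F^0 \cup F^1 \cup \Gamma$, and that every vertex in $F^0 \cup \Gamma$ is a sink in $G_F$ (this was observed in the proof of Proposition \ref{prop4.2}). Consequently any loop in $G_F$ can only pass through vertices lying in $F^1$; that is, a loop must be a cyclic sequence of edges of the form $(e_{i_1}, e_{i_2}), (e_{i_2}, e_{i_3}), \ldots, (e_{i_k}, e_{i_1})$ with each $e_{i_j} \in F^1 \subseteq \pgl$, where the defining condition $s(e_{i_{j+1}}) \subseteq r(e_{i_j})$ holds for the edge $(e_{i_j}, e_{i_{j+1}}) \in G_F^1$.

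The key observation is that such a cyclic sequence $e_{i_1} e_{i_2} \cdots e_{i_k}$ is \emph{itself} a loop in the quotient ultragraph $\gh$: the condition $s(e_{i_{j+1}}) \subseteq r(e_{i_j})$ is exactly the path-composability requirement in $\gh$, and closing up gives $s(e_{i_1}) \subseteq r(e_{i_k})$. So I would map a loop $\lambda$ in $G_F$ to its underlying loop $\alpha = e_{i_1}\cdots e_{i_k}$ in $\gh$. Since $\gh$ satisfies Condition (L) by hypothesis, $\alpha$ satisfies (i) or (ii). The heart of the argument is to push each of these two alternatives forward to produce either an exit for $\lambda$ in $G_F$ or a failure of range-equality, thereby verifying Condition (L) for $\lambda$.

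I would handle the two cases separately. If $\alpha$ satisfies clause (i), so $r(e_{i_j}) \setminus s(e_{i_{j+1}}) \neq [\emptyset]$ for some $j$, then there is a nonempty piece of $r(e_{i_j})$ not covered by $s(e_{i_{j+1}})$; I would argue this forces the existence of an additional $G_F^1$-edge out of the vertex $e_{i_j}$ — either toward some $\omega \in \Gamma$, some $[v] \in F^0$, or some other $f \in F^1$ — that differs from the loop edge $(e_{i_j}, e_{i_{j+1}})$, giving $\lambda$ an exit in $G_F$. This is the step requiring care: I must trace how the leftover range $r(e_{i_j}) \setminus s(e_{i_{j+1}})$ decomposes under the partition of $r(e_{i_j})$ into the various $r(\omega)$'s and the $F^0$ vertices, to guarantee that leftover range manifests as a genuine outgoing $G_F$-edge rather than being discarded. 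If instead $\alpha$ satisfies clause (ii), there is $f \in \pgl$ with $s(f) \subseteq r(e_{i_j})$ and $f \neq e_{i_{j+1}}$; here I would split on whether $f \in F^1$ (then $(e_{i_j}, f) \in G_F^1$ is directly an exit) or $f \notin F^1$ (then $s(f)$ lands in some $R(\omega)$, and the construction of $\Gamma$ — specifically the exclusion of indices in $\Gamma_0$ — should produce an $\omega \in \Gamma$ with $\omega_{i_j} = 1$, yielding the edge $(e_{i_j}, \omega)$ as an exit). The main obstacle I anticipate is precisely this bookkeeping in the $f \notin F^1$ subcase and in clause (i): one must verify that the relevant $\omega$ genuinely lies in $\Gamma$ rather than being absorbed into $\Gamma_0$, which is where the definition of $\Gamma_0$ (every vertex in $R(\omega)$ emitting only edges inside $F^1$) and the hypothesis $f \notin F^1$ interact. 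Once both cases are dispatched, every loop of $G_F$ has an exit, so $G_F$ satisfies Condition (L).
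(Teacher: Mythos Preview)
Your proposal is correct and follows essentially the same approach as the paper's proof: both observe that vertices in $F^0\cup\Gamma$ are sinks so any loop in $G_F$ lies entirely over $F^1$ and corresponds to a loop $\alpha$ in $\gh$, then split according to clauses (i) and (ii) of Condition~(L) for $\alpha$ and in each case locate an exit edge of the form $(e_i,f)$, $(e_i,[v])$, or $(e_i,\omega)$. The only organizational difference is that the paper handles clause~(i) under the additional harmless assumption that clause~(ii) fails (so $r(e_i)$ emits only $e_{i+1}$), which slightly streamlines the case analysis you anticipate; your identification of the $\Gamma_0$-exclusion as the key bookkeeping point is exactly right, and one should also not forget the sub-case $s(f)\in F^0$ when $f\notin F^1$.
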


\begin{proof}
Suppose that $\gh$ satisfies Condition (L). As the elements of $F^0\cup \Gamma$ are sinks in $G_F$, every loop in $G_F$ is of the form $\widetilde{\alpha}=(e_1,e_2)\ldots (e_n, e_1)$ corresponding with a loop $\alpha=e_1\ldots e_n$ in $\gh$. So, fix a loop $\widetilde{\alpha}=(e_1,e_2)\ldots (e_n, e_1)$ in $G_F$. Then $\alpha=e_1\ldots e_n$ is a loop in $\gh$ and by Condition (L), one of the following holds:
\begin{enumerate}[(i)]
\item $r(e_i)\neq s(e_{i+1})$ for some $1\leq i\leq n$, where $e_{i+1}:=e_1$, or
\item there exists $f\in \pgl$ such that $s(f)\subseteq r(e_i)$ and $f\neq e_{i+1}$ for some $1\leq i\leq n$.
\end{enumerate}

We can suppose in the case (i) that $s(e_{i+1})\subsetneq r(e_i)$ and $r(e_i)$ emits only the edge $e_{i+1}$ in $\gh$. Then, by the definition of $\Gamma$, there exists either $[v]\in F^0$ with $[v]\subseteq r(e_i)\setminus s(e_{i+1})$, or $\omega \in \Gamma $ with $\omega_i=1$. Thus, either $(e_i,[v])$ or $(e_i,\omega)$ is an exit for the loop $\widetilde{\alpha}$ in $G_F$, respectively.

Now assume case (ii) holds. If $f\in F^1$, then $(e_i,f)$ is an exit for $\widetilde{\alpha}$. If $f\notin F^1$, for $[v]:=s(f)$ we have either $[v]\notin F^0$ or
$$\exists \omega\in \Gamma ~ \mathrm{with} ~ \omega_i=1 ~ \mathrm{such~that} ~ [v]\subseteq R(\omega).$$
Hence, $(e_i,[v])$ or $(e_i,\omega)$ is an exit for $\widetilde{\alpha}$, respectively. Consequently, in any case, $\widetilde{\alpha}$ has an exit.
\end{proof}

\begin{thm}[The Cuntz-Krieger Uniqueness Theorem]\label{thm4.8}
Suppose that $\gh$ is a quotient ultragraph satisfying Condition (L). If $\{T_e,Q_A\}$ is a Cuntz-Krieger representation for $\gh$ in which all the projection $Q_{[A]}$ are nonzero for $[A]\neq [\emptyset]$, then the $*$-homomorphism $\pi_{T,Q}:C^*(\gh)\rightarrow C^*(T_e,Q_{[A]})$ with $\pi_{T,Q}(t_e)=T_e$ and $\pi_{T,Q}(q_{[A]})=Q_{[A]}$ is an isometrically isomorphism.
\end{thm}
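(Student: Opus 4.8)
The plan is to follow the proof of Theorem \ref{thm4.5} almost verbatim, substituting the Cuntz--Krieger uniqueness theorem for graph $C^*$-algebras for the gauge invariant one; the legitimacy of this substitution is precisely what Lemma \ref{lem4.7} provides. First I would fix an increasing sequence $\{F_n\}$ of finite subsets of $\sg\cup\pgl$ with $\bigcup_{n=1}^\infty F_n=\sg\cup\pgl$, and for each $n$ invoke Corollary \ref{cor4.3} to obtain an isomorphism $\pi_n\colon C^*(G_{F_n})\to X_n$ onto the $C^*$-subalgebra $X_n:=C^*(\{t_e,q_{[v]}:[v]\in F_n^0,\,e\in F_n^1\})$ of $C^*(\gh)$ that carries the canonical Cuntz--Krieger $G_{F_n}$-family to the family $\{T_x,Q_a\}$ built in Proposition \ref{prop4.2}.

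Next I would consider the composite $\pi_{T,Q}\circ\pi_n\colon C^*(G_{F_n})\to C^*(T_e,Q_{[A]})$. Since $\pi_{T,Q}$ intertwines the generators $t_e\mapsto T_e$ and $q_{[A]}\mapsto Q_{[A]}$, the image $\{\pi_{T,Q}(T_x),\pi_{T,Q}(Q_a)\}$ is again a Cuntz--Krieger $G_{F_n}$-family, now sitting inside $C^*(T_e,Q_{[A]})$. The one thing that must be checked is that all of its vertex projections are nonzero; this is a verbatim rerun of the nonvanishing argument of Proposition \ref{prop4.2}, performed in the target algebra, which used only that the range projections $Q_{r(e)}$ are nonzero, that each $[v]\in F_n^0$ is singular, and that every $\omega\in\Gamma$ dominates either an edge projection or a sink projection. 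Because the hypothesis guarantees $\pi_{T,Q}(q_{[A]})=Q_{[A]}\neq 0$ for $[A]\neq[\emptyset]$, the same reasoning delivers $\pi_{T,Q}(Q_a)\neq 0$ for every $a\in G_{F_n}^0$. By Lemma \ref{lem4.7} the finite graph $G_{F_n}$ satisfies Condition (L), so the Cuntz--Krieger uniqueness theorem for graph $C^*$-algebras applies and shows that $\pi_{T,Q}\circ\pi_n$ is faithful; in particular $\pi_{T,Q}|_{X_n}$ is injective, hence isometric.

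Finally, letting $n\to\infty$, the subalgebra $\bigcup_n X_n=C^*(t_e,q_{[v]}:[v]\in\sg,\,e\in\pgl)$ is dense in $C^*(\gh)$, and $\pi_{T,Q}$ is isometric on it; by continuity $\pi_{T,Q}$ is isometric, therefore injective, on all of $C^*(\gh)$, while surjectivity onto $C^*(T_e,Q_{[A]})$ is automatic since that algebra is generated by the image family. I expect the only genuine point requiring attention to be the nonvanishing of the pushed-forward vertex projections in $C^*(T_e,Q_{[A]})$: this is where the bare hypothesis $Q_{[A]}\neq0$ has to be leveraged, and it is also the place where the present argument differs from Theorem \ref{thm4.5}, since here Condition (L) via Lemma \ref{lem4.7}---rather than an equivariant gauge action---is what licenses the appeal to the graph-level uniqueness theorem.
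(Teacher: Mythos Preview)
Your proposal is correct and follows essentially the same approach as the paper: approximate by the finite subalgebras $X_n$, use Corollary~\ref{cor4.3} and Lemma~\ref{lem4.7} to apply the graph Cuntz--Krieger uniqueness theorem to each $\pi_{T,Q}\circ\pi_n$, and pass to the direct limit via density. The paper's proof is terser---it does not spell out the nonvanishing check for the pushed-forward vertex projections that you flag---but the argument is otherwise identical.
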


\begin{proof}
It suffices to show that $\pi_{T,Q}$ is faithful. Similar to Theorem \ref{thm4.5}, choose an increasing sequence $\{F_n\}$ of finite sets such that $\cup_{n=1}^\infty F_n=\sg \cup \pgl$. By Corollary \ref{cor4.3}, there are isomorphisms $\pi_n:  C^*(G_{F_n}) \rightarrow C^*(\{t_e,q_{[v]}:[v]\in F_n^0,e\in F_n^1\})$ that respect the generators. Since all the graphs $G_{F_n}$ satisfy Condition (L) by Lemma \ref{lem4.7}, the Cuntz-Krieger uniqueness theorem for graph $C^*$-algebras implies that the $*$-homomorphisms
$$\pi_{T,Q}\circ \pi_n: C^*(G_{F_n})\rightarrow C^*(T_e,Q_{[A]})$$
are faithful. Therefore, $\pi_{T,Q}$ is faithful on the subalgebra $C^*(t_e,q_{[v]}:[v]\in \sg, e\in \pgl)$ of $C^*(\gh)$. Since this subalgebra is dense in $C^*(\gh)$, we conclude that $\pi_{T,Q}$ is a faithful homomorphism.
\end{proof}


\section{Primitive ideals in $C^*(\g)$}

In this section, we apply quotient ultragraphs to describe primitive gauge invariant ideals of an ultragraph $C^*$-algebra. Recall that since every ultragraph $C^*$-algebra $C^*(\g)$ is separable (as assumed $\go$ to be countable), a prime ideal of $C^*(\g)$ is primitive and vice versa \cite[Corollaire 1]{dix}.

To prove Proposition \ref{prop7.3} below, we need the following simple lemmas.

\begin{lem}\label{lem5.1}
Let $\g/(H,B)=(\pGo,\pgo, \pgl,r,s)$ be a quotient ultragraph of $\g$. If $\gh$ does not satisfy Condition (L), then $C^*(\g/(H,B))$ contains an ideal Morita-equivalent to $C(\mathbb{T})$.
\end{lem}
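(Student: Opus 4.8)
The plan is to extract from the failure of Condition (L) a genuine loop without exit, show that its edge generators assemble into a unitary living in a corner that is isomorphic to $C(\mathbb{T})$, and then realise that corner as a \emph{full} corner of an honest ideal of $C^*(\gh)$, so that Rieffel's corner Morita equivalence finishes the proof.

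First I would unpack the hypothesis. Since $\gh$ fails Condition (L), there is a loop $\alpha=e_1\cdots e_n$ for which neither alternative holds; that is, $r(e_i)=s(e_{i+1})$ for every $i$ (indices taken mod $n$) and the only edge $f\in\pgl$ with $s(f)\subseteq r(e_i)$ is $e_{i+1}$. Writing $[v_{i+1}]:=s(e_{i+1})$, the first condition says each $r(e_i)$ is the singleton class $[v_{i+1}]$, and the second says $s^{-1}([v_{i+1}])=\{e_{i+1}\}$; hence every loop vertex is regular and emits exactly one edge. Replacing $\alpha$ by its first return to $[v_1]$ I may assume the classes $[v_1],\dots,[v_n]$ are distinct. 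Now (QA4) applied at each $[v_i]$ gives $q_{[v_i]}=t_{e_i}t_{e_i}^*$, while (QA2) gives $t_{e_i}^*t_{e_i}=q_{r(e_i)}=q_{[v_{i+1}]}$, and these combine into the absorption relations $q_{[v_i]}t_{e_i}=t_{e_i}=t_{e_i}q_{[v_{i+1}]}$.

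Next, set $v:=[v_1]$ and $U:=t_\alpha=t_{e_1}\cdots t_{e_n}$. A short telescoping computation using the identities above shows $U^*U=UU^*=q_v$, so $U$ is a unitary of the corner $q_v\,C^*(\gh)\,q_v$. I would then identify this corner exactly. Because each loop vertex emits a single edge and distinct vertex classes are disjoint, any path $\mu$ with $q_v t_\mu\neq 0$ is forced to have $s(\mu)=[v_1]$, hence $\mu$ is an initial segment of the loop ray $e_1e_2e_3\cdots$; compressing the spanning set $t_\mu q_{[A]}t_\nu^*$ by $q_v$ on both sides, together with the intersection condition $r(\mu)\cap[A]\cap r(\nu)\neq[\emptyset]$, then collapses every surviving term to a power of $U$ or of $U^*$. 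Thus $q_v\,C^*(\gh)\,q_v=C^*(U)$. Finally, the gauge action satisfies $\gamma_z(U)=z^nU$ and fixes $q_v$, so $\sigma(U)$ is a nonempty closed subset of $\mathbb{T}$ invariant under multiplication by $z^n$ for all $z\in\mathbb{T}$, which forces $\sigma(U)=\mathbb{T}$ and therefore $q_v\,C^*(\gh)\,q_v=C^*(U)\cong C(\mathbb{T})$.

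To finish, let $I$ be the closed two-sided ideal of $C^*(\gh)$ generated by $q_v$. Since $q_v\in I$, the hereditary subalgebra $q_vIq_v$ coincides with $q_v\,C^*(\gh)\,q_v\cong C(\mathbb{T})$, and $q_v$ is full in $I$ by construction; the standard corner (Rieffel) Morita equivalence then yields $I\sim_{\mathrm{Morita}}q_vIq_v\cong C(\mathbb{T})$, as required. The step I expect to be the main obstacle is the precise identification $q_v\,C^*(\gh)\,q_v=C^*(U)$: one must check, using the spanning description of $C^*(\gh)$ together with the ``unique outgoing edge'' property of the loop vertices, that no element outside $C^*(U)$ survives compression by $q_v$ — in particular that edges feeding \emph{into} the loop (whose sources are not loop vertices) cannot enlarge the corner. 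Once this is in place, the gauge-action argument for $\sigma(U)=\mathbb{T}$ and the fullness of $q_v$ in $I$ are routine.
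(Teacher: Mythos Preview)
Your argument is correct and follows essentially the same route as the paper: both take a simple exit-free loop $\gamma$ with $r(e_i)=s(e_{i+1})$, let $I$ be the ideal generated by $q_{[v]}$ for $[v]=s(\gamma)$, identify the full corner $q_{[v]}Iq_{[v]}=q_{[v]}C^*(\gh)q_{[v]}$ with $C(\mathbb{T})$, and conclude by Rieffel's corner Morita equivalence. The only difference is in the last identification: the paper realises the corner as $C^*(E)$ for the one-vertex one-loop graph $E$ and invokes the gauge-invariant uniqueness theorem, whereas you argue directly that $\sigma(U)=\mathbb{T}$ from the gauge relation $\gamma_z(U)=z^{n}U$ --- a slightly more self-contained variant of the same step.
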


\begin{proof}
Suppose that $\gamma=e_1\ldots e_n$ is a loop in $\g/(H,B)$ without exits and $r(e_i)=s(e_{i+1})$ for $1\leq i\leq n$. If $C^*(\gh)=C^*(t_e,q_{[A]})$, for each $i$ we have
$$t_{e_i}^*t_{e_i}=q_{r(e_i)}=q_{s(e_{i+1})}=t_{e_{i+1}}t_{e_{i+1}}^*.$$
Write $[v]:=s(\gamma)$ and let $I_{\gamma}$ be the ideal of $C^*(\g/(H,B))$ generated by $q_{[v]}$. Since $\gamma$ has no exits in $\gh$ and we have
$$q_{s(e_i)}=(t_{e_i}\ldots t_{e_n})q_{[v]}(t_{e_n}^* \ldots t_{e_i}^*)  \hspace{10mm} (1\leq i\leq n),$$
an easy argument shows that
$$I_{\gamma}=\overline{\mathrm{span}}\left\{t_{\alpha}q_{[v]}t_{\beta}^*: \alpha,\beta\in (\g/(H,B))^*, [v]\subseteq r(\alpha)\cap r(\beta)\right\}.$$
So, we get
$$q_{[v]}I_{\gamma}q_{[v]}=\overline{\mathrm{span}}\left\{(t_{\gamma})^n q_{[v]} (t_{\gamma}^*)^m: m,n\geq 0\right\},$$
where $(t_\gamma)^0=(t_\gamma^*)^0:=q_{[v]}$. We show that $q_{[v]}I_{\gamma}q_{[v]}$ is a full corner in $I_{\gamma}$ which is isometrically isomorphic to $C(\mathbb{T})$. For this, let $E$ be the graph with one vertex $w$ and one loop $f$. If we set $Q_w:=q_{[v]}$ and $T_f:=t_{\gamma}~(=t_{\gamma}q_{[v]})$, then $\{T_f,Q_w\}$ is a Cuntz-Krieger $E$-family in $q_{[v]}I_{\gamma}q_{[v]}$. Assume $C^*(E)=C^*(s_f,p_w)$. Since $Q_w\neq 0$, the gauge-invariant uniqueness theorem for graph $C^*$-algebras implies that the $*$-homomorphism $\phi:C^*(E)\rightarrow q_{[v]}I_{\gamma}q_{[v]}$ with $p_w\mapsto Q_w$ and $s_f\mapsto T_f$ is faithful. Moreover, the $C^*$-algebra $q_{[v]}I_{\gamma}q_{[v]}$ is generated by $\{T_f,Q_w\}$, and hence $\phi$ is an isomorphism. As we know $C^*(E)\cong C(\mathbb{T})$, $q_{[v]}I_{\gamma}q_{[v]}$ is isomorphic to $C(\mathbb{T})$. Moreover, since $q_{[v]}$ generates $I_\gamma$, the corner $q_{[v]}I_{\gamma}q_{[v]}$ is full in $I_\gamma$. Thus, $I_\gamma$ is Morita-equivalent to $q_{[v]}I_\gamma q_{[v]}\cong C(\mathbb{T})$ and the proof is complete.
\end{proof}

\begin{lem}\label{lem7.2}
If $\gh$ satisfies Condition (L), then any nonzero ideal in $C^*(\gh)$ contains projection $q_{[A]}$ for some $[A]\neq [\emptyset]$.
\end{lem}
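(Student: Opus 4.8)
The plan is to argue by contradiction and to reduce the statement directly to the Cuntz-Krieger uniqueness theorem (Theorem \ref{thm4.8}), which is already available for $\gh$. Suppose toward a contradiction that $I$ is a nonzero ideal of $C^*(\gh)$ which contains no vertex projection, i.e. $q_{[A]}\notin I$ for every $[A]\neq[\emptyset]$. Let $\pi\colon C^*(\gh)\rightarrow C^*(\gh)/I$ be the quotient $*$-homomorphism, and set $T_e:=\pi(t_e)$ for $e\in\pgl$ and $Q_{[A]}:=\pi(q_{[A]})$ for $[A]\in\pgo$.

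First I would verify that $\{T_e,Q_{[A]}\}$ is again a representation of $\gh$ in $C^*(\gh)/I$. This is immediate, since a $*$-homomorphism carries the defining relations (QA1)--(QA4) of Definition \ref{defn3.8} to the same relations among the images; thus $\{T_e,Q_{[A]}\}$ inherits them from the universal family $\{t_e,q_{[A]}\}$. Moreover, by universality the induced $*$-homomorphism $\pi_{T,Q}\colon C^*(\gh)\rightarrow C^*(\gh)/I$ determined by $t_e\mapsto T_e$ and $q_{[A]}\mapsto Q_{[A]}$ is exactly $\pi$ itself.

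Next I would use the standing assumption on $I$. Since $Q_{[A]}=\pi(q_{[A]})=0$ is equivalent to $q_{[A]}\in I$, the hypothesis that $I$ contains no vertex projection gives $Q_{[A]}\neq 0$ for every $[A]\neq[\emptyset]$. Because $\gh$ satisfies Condition (L), Theorem \ref{thm4.8} applies and forces $\pi_{T,Q}=\pi$ to be an isomorphism, in particular injective. Then $I=\ker\pi=0$, contradicting that $I$ is nonzero. Hence some $q_{[A]}$ with $[A]\neq[\emptyset]$ must lie in $I$.

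I do not anticipate a genuine obstacle: the only points needing a word of justification are that the quotiented family is a representation (a formal consequence of $\pi$ being a $*$-homomorphism preserving (QA1)--(QA4)) and that the nonvanishing of all $Q_{[A]}$ is precisely the contrapositive of $I$ containing a vertex projection. The substantive content is carried entirely by Theorem \ref{thm4.8}, whose Condition (L) hypothesis is inherited verbatim from the assumption on $\gh$. The only alternative, more hands-on route---compressing a nonzero element of $I$ into some finite-graph subalgebra $C^*(G_F)$ and invoking the graph-level statement---would be messier, because the vertex projections of $G_F$ produced by Proposition \ref{prop4.2} (the $Q_e$, $Q_{[v]}$, $Q_\omega$) are not literally the $q_{[A]}$, so I would avoid it in favor of the uniqueness-theorem argument above.
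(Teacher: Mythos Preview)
Your proposal is correct and follows essentially the same approach as the paper: the paper assumes no $q_{[A]}$ with $[A]\neq[\emptyset]$ lies in the ideal, applies Theorem~\ref{thm4.8} to the quotient homomorphism $C^*(\gh)\to C^*(\gh)/J$ to deduce it is injective, and concludes $J=(0)$. Your write-up simply makes explicit the (routine) verification that the quotient family is a representation with nonzero projections, which the paper leaves implicit.
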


\begin{proof}
Take an arbitrary ideal $J$ in $C^*(\gh)$. If there are no $q_{[A]}\in J$ with $[A]\neq[\emptyset]$, then Theorem \ref{thm4.8} implies that the quotient homomorphism $\phi:C^*(\gh)\rightarrow C^*(\gh)/J$ is injective. Hence, we have $J=\ker \phi=(0)$.
\end{proof}

\begin{defn}\label{defn7.1}
Let $\g$ be an ultragraph. For two sets $A,B\in \go$, we write $A\geq B$ if either $B\subseteq A$, or there exists $\alpha\in \g^*$ with $|\alpha|\geq 1$ such that $s(\alpha)\in A$ and $B\subseteq r(\alpha)$. We simply write $A\geq v$, $v\geq B$, and $v\geq w$ if $A\geq \{v\}$, $\{v\}\geq B$, and $\{v\}\geq \{w\}$, respectively. A subset $M\subseteq \go$ is said to be \emph{downward directed} whenever for every $A,B\in M$, there exists $\emptyset\neq C\in M$ such that $A,B\geq C$.
\end{defn}

\begin{prop}\label{prop7.3}
Let $H$ be a saturated hereditary subset of $\go$. Then the ideal $I_{(H,B_H)}$ in $C^*(\g)$ is primitive if and only if the quotient ultragraph $\g/(H,B_H)$ satisfies Condition (L) and the collection $\go \setminus H$ is downward directed.
\end{prop}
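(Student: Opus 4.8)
The plan is to reduce the assertion to the primeness of the quotient and then to analyse that primeness through the projections and gauge-invariant ideals furnished by the uniqueness theorems of Section 4. Since $C^*(\g)$ is separable, the ideal $I_{(H,B_H)}$ is primitive if and only if it is prime \cite{dix}, i.e. if and only if the quotient $C^*(\g)/I_{(H,B_H)}$ is a prime $C^*$-algebra (its zero ideal is prime). As $B=B_H$ forces $B_H\setminus B=\emptyset$, the extended ultragraph coincides with $\g$ and $\g/(H,B_H)$ is simply $\g$ with $H$ collapsed; by Proposition \ref{prop5.1} it therefore suffices to prove that $C^*(\g/(H,B_H))=C^*(t_e,q_{[A]})$ is prime exactly when $\g/(H,B_H)$ satisfies Condition (L) and $\go\setminus H$ is downward directed.

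For \emph{sufficiency}, assume both conditions and let $J_1,J_2$ be nonzero ideals. By Condition (L) and Lemma \ref{lem7.2}, each $J_i$ contains a nonzero projection $q_{[A_i]}$ with $A_i\in\go\setminus H$, and downward directedness gives $C\in\go\setminus H$ with $A_1\geq C$ and $A_2\geq C$. If $C\subseteq A_i$ then $q_{[C]}=q_{[C]}q_{[A_i]}\in J_i$; otherwise $A_i\geq C$ through a path $\alpha$ with $v:=s_\g(\alpha)\in A_i$ and $C\subseteq r_\g(\alpha)$ (here $v\notin H$, since otherwise the hereditary property would force $r_\g(\alpha)\in H$ and hence $C\in H$, and $\alpha\in(\g/(H,B_H))^*$ because $r_\g(\alpha)\notin H$). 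Then $q_{[v]}=q_{[v]}q_{[A_i]}\in J_i$, and from $t_\alpha t_\alpha^*\leq q_{s(\alpha)}=q_{[v]}$ we get $q_{[v]}t_\alpha=t_\alpha$, so $t_\alpha^*q_{[v]}t_\alpha=q_{r(\alpha)}\in J_i$ and hence $q_{[C]}=q_{[C]}q_{r(\alpha)}\in J_i$. Thus the nonzero projection $q_{[C]}$ lies in $J_1\cap J_2$, proving primeness.

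For \emph{necessity}, suppose the quotient is prime. If Condition (L) failed, Lemma \ref{lem5.1} would produce an ideal $I_\gamma$ Morita equivalent to $C(\T)$; as $C(\T)$ is not prime and primeness is a Morita invariant, $I_\gamma$ would contain nonzero ideals with zero intersection, and since closed ideals of an ideal of a $C^*$-algebra are ideals of the whole algebra, this would contradict primeness. Hence Condition (L) holds. Using it once more, every nonzero ideal contains a nonzero \emph{gauge-invariant} ideal: by Lemma \ref{lem7.2} it contains some $q_{[C]}\neq 0$, and the ideal generated by the gauge-invariant projection $q_{[C]}$ is gauge-invariant and nonzero. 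Thus primeness is equivalent to any two nonzero gauge-invariant ideals meeting nontrivially. By Proposition \ref{prop5.1} and \cite[Theorem 6.12]{kat3} the nonzero gauge-invariant ideals correspond to saturated hereditary sets properly containing $H$, with intersections matching set intersections; so for $A,B\in\go\setminus H$ the gauge-invariant ideals generated by $q_{[A]}$ and $q_{[B]}$ correspond to the saturated hereditary closures $H_A=\overline{H\cup\{A\}}$ and $H_B=\overline{H\cup\{B\}}$, and primeness forces $H_A\cap H_B\supsetneq H$.

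The remaining step, converting an element of $(H_A\cap H_B)\setminus H$ into a genuine common lower bound for $\geq$, is the main obstacle, because the saturation step may insert vertices into $H_A$ that are not themselves reachable from $A$. I would argue at the level of single vertices: if $\{x\}\in(H_A\cap H_B)\setminus H$ already lies in the hereditary core of $\overline{H\cup\{A\}}$ (and of $\overline{H\cup\{B\}}$) before saturation, then $A\geq x$ and $B\geq x$, so $\{x\}$ is the desired common lower bound; if $\{x\}$ enters only by saturation, then $x$ is regular with $r_\g(s_\g^{-1}(x))\subseteq H_A\cap H_B$, and since $x\notin H$ at least one such range meets $\go\setminus H$, permitting a descent through the well-founded stages of the saturation process to a core vertex $y\notin H$ with $A\geq y$ and $B\geq y$, whence transitivity of $\geq$ yields the common lower bound. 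The genuinely delicate points are the well-foundedness of this descent and, specific to the ultragraph setting, the treatment of sets in $H_A\cap H_B$ all of whose singletons lie in $H$ (which can occur when range sets are infinite) before one may pass to the vertex level; carrying out this descent rigorously is where the real work of the proof lies.
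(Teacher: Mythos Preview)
Your sufficiency argument and your derivation of Condition (L) in the necessity direction are correct and match the paper's proof. The gap is in your argument for downward directedness, which you yourself flag as incomplete: the detour through saturated hereditary closures $H_A,H_B$ and a ``descent through saturation stages'' is not carried out, and the obstacles you identify (well-foundedness of the descent, and sets in $H_A\cap H_B$ whose singletons all lie in $H$) are genuine. There is also a subtler issue: when $\{x\}\in (H_A\cap H_B)\setminus H$ enters $H_A$ only by saturation, you need the ranges $r_\g(s_\g^{-1}(x))$ to lie in $H_A\cap H_B$, but you only know they lie in $H_A$; membership of $x$ in $H_B$ may come from the hereditary core rather than saturation, so the parallel descent need not exist.

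The paper avoids all of this with a one-line direct argument. From primeness of $C^*(\g/(H,B_H))$ one gets that the product of the ideals generated by $q_{[A]}$ and $q_{[B]}$ is nonzero, hence $q_{[A]}\,C^*(\g/(H,B_H))\,q_{[B]}\neq\{0\}$. Since elements of the form $t_\alpha q_{[D]}t_\beta^*$ span a dense subspace, there exist $\alpha,\beta\in(\g/(H,B_H))^*$ and $[D]\in\pgo$ with $q_{[A]}\,t_\alpha q_{[D]}t_\beta^*\,q_{[B]}\neq 0$; this forces $s(\alpha)\subseteq[A]$ and $s(\beta)\subseteq[B]$, and then $C:=r_\g(\alpha)\cap D\cap r_\g(\beta)$ is a nonempty element of $\go\setminus H$ with $A\geq C$ and $B\geq C$. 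No saturation analysis is needed: the common lower bound is read off directly from a single nonzero spanning element.
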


\begin{proof}
Let $I_{(H,B_H)}$ be a primitive ideal of $C^*(\g)$. Since $C^*(\g)/I_{(H,B_H)}\cong C^*(\g/(H,B_H))$, the zero ideal in $C^*(\g/(H,B_H))$ is primitive. If $\g/(H,B_H)$ does not satisfy Condition (L), then $C^*(\g/(H,B_H))$ contains an ideal $J$ Morita-equivalent to $C(\mathbb{T})$  by Lemma \ref{lem5.1}. Select two ideals $I_1,I_2$ in $C(\mathbb{T})$ with $I_1\cap I_2=(0)$, and let $J_1,J_2$ be their corresponding ideals in $J$. Then $J_1$ and $J_2$ are two nonzero ideals of $C^*(\g/(H,B_H))$ with $J_1\cap J_2=(0)$, contradicting the primness of $C^*(\g/(H,B_H))$. Therefore, $\gh$ satisfies Condition (L).

Now we show that $M:=\go\setminus H$ is downward directed. For this, we take two arbitrary sets $A,B\in M$ and consider the ideals
$$J_1:=C^*(\g/(H,B_H)) q_{[A]} C^*(\g/(H,B_H))$$
and
$$J_2:=C^*(\g/(H,B_H)) q_{[B]} C^*(\g/(H,B_H))$$
in $C^*(\g/(H,B_H))$ generated by $q_{[A]}$ and $q_{[B]}$, respectively. Since $A,B\notin H$, the projections $q_{[A]},q_{[B]}$ are nonzero by Theorem \ref{thm3.11}, and so are the ideals $J_1,J_2$. The primness of $C^*(\g/(H,B_H))$ implies that the ideal
$$J_1J_2=C^*\left(\g/(H,B_H)\right) q_{[A]} C^*\left(\g/(H,B_H)\right) q_{[B]} C^*\left(\g/(H,B_H)\right)$$
is nonzero, and hence $q_{[A]} C^*(\g/(H,B_H)) q_{[B]} \neq \{0\}$. As the set
$$\mathrm{span}\left\{ t_\alpha q_{[D]} t_\beta^* : \alpha, \beta \in (\gh)^*,~ r(\alpha)\cap [D]\cap r(\beta)\neq [\emptyset] \right\}$$
is dense in $C^*(\g/(H,B_H))$, there exist $\alpha,\beta\in (\g/(H,B_H))^*$ and $[D]\in \pgo$ such that $q_{[A]}(t_\alpha q_{[D]} t_\beta^*) q_{[B]}\neq 0$. In this case, we must have $s(\alpha)\subseteq [A]$ and $s(\beta)\subseteq [B]$ and thus, $A,B\geq C$ for $C:=r_\g(\alpha) \cap D \cap r_\g(\beta)$. Therefore, $\go\setminus H$ is downward directed.

For the converse, we assume that $\g/(H,B_H)$ satisfies Condition (L) and the collection $M=\go\setminus H$ is downward directed. Fix two nonzero ideals $J_1,J_2$ of $C^*(\g/(H,B_H))$. By Lemma \ref{lem7.2}, there are nonzero projections $q_{[A]}\in J_1$ and $q_{[B]}\in J_2$. Then $A,B\notin H$ and, since $M$ is downward directed, there exists $C\in M$ such that $A,B\geq C$. Hence, the ideal $J_1\cap J_2$ contains the nonzero projection $q_{[C]}$. Since $J_1$ and $J_2$ were arbitrary, this concludes that the $C^*$-algebra $C^*(\g/(H,B_H))$ is primitive and $I_{(H,B_H)}$ is a primitive ideal in $C^*(\g)$ by Proposition \ref{prop5.1}.
\end{proof}

The following proposition describes another kind of primitive ideals in $C^*(\g)$.

\begin{prop}\label{prop7.4}
Let $(H,B)$ be an admissible pair in $\g$ and let $B=B_H\setminus \{w\}$. Then the ideal $I_{(H,B)}$ in $C^*(\g)$ is primitive if and only if $A\geq w$ for all $A\in \go\setminus H$.
\end{prop}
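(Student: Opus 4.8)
The plan is to pass to the quotient and work inside $C^*(\gh)$. By Proposition \ref{prop5.1} we have $C^*(\g)/I_{(H,B)}\cong C^*(\gh)$, and since $C^*(\g)$ is separable, $I_{(H,B)}$ is primitive iff it is prime, i.e. iff $C^*(\gh)$ is prime. The decisive structural feature is that here $B_H\setminus B=\{w\}$, so $\overline{\g}$ carries a single extra vertex $w'$, every edge emitted by $w'$ has range in $H$, and hence $[w']$ emits no edge of $\pgl$; that is, $[w']$ is a sink of $\gh$ with $q_{[w']}\neq 0$ (Theorem \ref{thm3.11}). I would first record a reachability mechanism: for $A\in\go\setminus H$ and any ideal $J$ of $C^*(\gh)$ with $q_{[A]}\in J$, the condition $A\geq w$ forces $q_{[w']}\in J$. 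Indeed, if $w\in A$ then $[w']\subseteq[A]$ and $q_{[w']}\leq q_{[A]}$; otherwise a path $\alpha$ in $\g$ with $s_\g(\alpha)\in A$ and $w\in r_\g(\alpha)$ has all its ranges outside $H$ (by heredity), hence survives in $\gh$ with $s(\alpha)\subseteq[A]$, so that $q_{r(\alpha)}=t_\alpha^* q_{[A]} t_\alpha\in J$ while $q_{[w']}\leq q_{r(\alpha)}$.

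Let $I$ be the ideal of $C^*(\gh)$ generated by $q_{[w']}$. Because $[w']$ is a sink, the spanning description of $C^*(\gh)$ collapses the corner to $q_{[w']}C^*(\gh)q_{[w']}=\C q_{[w']}$, a full corner of $I$; thus $I$ is Morita equivalent to $\C$ and in particular simple. The key reduction is then: $C^*(\gh)$ is prime iff $I$ is essential iff every nonzero ideal of $C^*(\gh)$ contains $q_{[w']}$. If $I$ is essential, then for nonzero ideals $J_1,J_2$ each $J_i\cap I$ is a nonzero ideal of the simple algebra $I$, so $I\subseteq J_1\cap J_2$; conversely primeness makes $I$ meet every nonzero ideal, and simplicity upgrades this to containment of $q_{[w']}$. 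It therefore suffices to show that every nonzero ideal contains $q_{[w']}$ exactly when $A\geq w$ for all $A\in\go\setminus H$.

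For the "if" direction I would first show that the hypothesis forces $\gh$ to satisfy Condition (L): a loop $e_1\ldots e_n$ with $r(e_i)=s(e_{i+1})$ for all $i$ and no exit is impossible, since its base vertex reaches $w$, and following the loop to a range containing $w$ would place $[w']\subseteq s(e_{j+1})$ inside a single source-vertex class, contradicting that $w'$ is a sink and $\{w'\}\notin H$; the case where the base is $[w]$ is excluded similarly, as the closing edge would force $[w']\subseteq[w]$. With Condition (L) in hand, Lemma \ref{lem7.2} provides $q_{[D]}\in J$ with $[D]\neq[\emptyset]$, hence $D\notin H$; if $w'\in D$ then $q_{[w']}\leq q_{[D]}$ at once, and otherwise $D=D\cap G^0\in\go\setminus H$ satisfies $D\geq w$, so the reachability mechanism yields $q_{[w']}\in J$. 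Thus $I$ is essential and $C^*(\gh)$ is prime.

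For the "only if" direction I would argue contrapositively: if some $A_0\in\go\setminus H$ has $A_0\not\geq w$, I claim the nonzero gauge-invariant ideal $J_{A_0}$ generated by $q_{[A_0]}$ omits $q_{[w']}$, so $I$ is not essential and $C^*(\gh)$ is not prime. This rests on the converse of the reachability mechanism, $q_{[w']}\in J_{A_0}\Rightarrow A_0\geq w$, which I expect to be the main obstacle. The cleanest route is to transport the question to $C^*(\overline{\g})$, where $q_{[w']}=p_w^H+I_{(H,B)}$ and $J_{A_0}$ corresponds to the gauge-invariant ideal $I_{(H^*,B^*)}$ with $H^*$ the saturated hereditary closure of $H\cup\{A_0\}$; one must then show that membership $p_w^H\in I_{(H^*,B^*)}$ entails a genuine directed path witnessing $A_0\geq w$. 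The subtle point is that the sink $w'$ has $s^{-1}([w'])=\emptyset$, so no saturation step can pull $[w']$ into the closure of $[A_0]$; it can enter only through actual forward reachability. I would make this precise either by the saturated-hereditary-closure bookkeeping inside $\gh$ or, more robustly, by exhibiting a representation of $\overline{\g}$ that annihilates $I_{(H,B)}$ and $p_{A_0}$ but not $p_w^H$, certifying $q_{[w']}\notin J_{A_0}$. Combining the two directions with Proposition \ref{prop5.1} yields the stated equivalence.
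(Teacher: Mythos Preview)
Your ``if'' direction is correct and in fact cleaner than the paper's: you observe that the ideal $I=\langle q_{[w']}\rangle$ is simple (its full corner is $\C q_{[w']}$), verify Condition~(L), and then use Lemma~\ref{lem7.2} plus the reachability mechanism to show $I$ is essential, which forces primeness. The paper instead checks that $\go\setminus H$ is downward directed and that $\gh$ has Condition~(L), then invokes the argument of Proposition~\ref{prop7.3}; the two routes are equivalent, and yours isolates the role of the sink $[w']$ more transparently.

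The ``only if'' direction, however, has a genuine gap. You go contrapositive and try to show that if $A_0\not\geq w$ then the ideal $J_{A_0}=\langle q_{[A_0]}\rangle$ omits $q_{[w']}$. You correctly identify this as the hard step and offer two sketches (saturated-closure bookkeeping, or building a separating representation), but neither is carried out, and both would require nontrivial additional machinery not available in the paper. The paper avoids this difficulty entirely by arguing \emph{directly} rather than contrapositively: given primeness and any $A\in\go\setminus H$, the ideals $\langle q_{[\overline A]}\rangle$ and $\langle q_{[w']}\rangle$ are nonzero, so their product is nonzero, whence $q_{[\overline A]}\,C^*(\gh)\,q_{[w']}\neq 0$. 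Picking a nonzero element $q_{[\overline A]}\,t_\alpha q_{[D]} t_\beta^*\,q_{[w']}$ from the spanning set and using that $[w']$ is a sink forces $|\beta|=0$, so $q_{[\overline A]}\,t_\alpha\,q_{[w']}\neq 0$; reading off $s(\alpha)\subseteq[\overline A]$ and $[w']\subseteq r(\alpha)$ immediately gives $A\geq w$. This two-line argument replaces your entire contrapositive program, and you should adopt it.
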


\begin{proof}
Suppose that $I_{(H,B)}$ is a primitive ideal and take an arbitrary $A\in \go\setminus H$. If $\overline{A}:=A\cup \{v':v\in A\cap (B_H\setminus B)\}$, then $q_{[\overline{A}]}$ and $q_{[w']}$ are two nonzero projections in $C^*(\g/(H,B))$. If we consider ideals $J_{[\overline{A}]}:=\langle q_{[\overline{A}]} \rangle$ and $J_{[w']}:= \langle q_{[w']} \rangle$ in $C^*(\gh)$, then the primness of $C^*(\gh)\cong C^*(\g)/I_{H,B}$ implies that the ideal
$$J_{[\overline{A}]} J_{[w']}=C^*(\g/(H,B)) q_{[\overline{A}]} C^*(\g/(H,B)) q_{[w']} C^*(\g/(H,B))$$
is nonzero, and hence $q_{[\overline{A}]} C^*(\g/(H,B)) q_{[w']}\neq \{0\}$. So, there exist $\alpha,\beta\in (\g/(H,B))^*$ such that $q_{[\overline{A}]}t_\alpha t_\beta^* q_{[w']}\neq 0$. Since $[w']$ is a sink in $\g/(H,B)$, we must have $q_{[\overline{A}]} t_\alpha q_{[w']}\neq 0$. If $|\alpha|=0$, then $[w']\subseteq [\overline{A}]$, $w'\in \overline{A}$ and $w\in A$. If $|\alpha|\geq 1$, then $s(\alpha)\subseteq [\overline{A}]$ and $[w']\subseteq r(\alpha)$, which follow $s_\g(\alpha)\in A$ and $w\in r_\g(\alpha)$. Therefore, we obtain $A\geq w$ in either case.

Conversely, assume $A\geq w$ for every $A\in \go\setminus H$. Then the collection $\go\setminus H$ is downward directed. Moreover, for every $[\emptyset]\ne [A]\in \pgo$, there exists $\alpha\in (\gh)^*$ such that $s(\alpha)\subseteq [A]$ and $[w']\subseteq r(\alpha)$. As $[w']$ is a sink in $\gh$, we see that the quotient ultragraph $\gh$ satisfies Condition (L). Now similar to the proof of Proposition \ref{prop7.3}, we can show that $I_{(H,B)}$ is a primitive ideal.
\end{proof}

Recall that each loop in $\gh$ comes from a loop in the initial ultragraph $\g$. So, to check Condition (L) for a quotient ultragraph $\gh$, we can use the following.

\begin{defn}
Let $H$ be a saturated hereditary subset of $\go$. For simplicity, we say that a path $\alpha=e_1\ldots e_n$ lies in $\g\setminus H$ whenever $r_\g(\alpha)\in \go\setminus H$. We also say that $\alpha$ has \emph{an exit in $\g\setminus H$} if either $r_\g(e_i)\setminus s_\g(e_{i+1})\in \go\setminus H$ for some $i$, or there is an edge $f$ with $r_\g(f)\in \go\setminus H$ such that $s_\g(f)=s_\g(e_i)$ and $f\neq e_i$, for some $1\leq i\leq n$.
\end{defn}

It is easy to verify that a quotient ultragraph $\g/(H,B)$ satisfies Condition (L) if and only if every loop in $\g\setminus H$ has an exit in $\g\setminus H$. Hence we have:
\begin{thm}\label{thm7.6}
Let $\g$ be an ultragraph. A gauge invariant ideal $I_{(H,B)}$ of $C^*(\g)$ is primitive if and only if one of the following holds:
\begin{enumerate}
\item $B=B_H$, $\go\setminus H$ is downward directed, and every loop in $\g\setminus H$ has an exit in $\g\setminus H$.
\item $B=B_H\setminus\{w\}$ for some $w\in B_H$, and $A\geq w$ for all $A\in \go\setminus H$.
\end{enumerate}
\end{thm}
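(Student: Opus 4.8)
The plan is to read Theorem \ref{thm7.6} as the consolidation of Propositions \ref{prop7.3} and \ref{prop7.4}, together with one genuinely new ingredient: the assertion that primitivity forces $B$ to differ from $B_H$ in at most one vertex. Throughout I would work on the quotient side, using Proposition \ref{prop5.1} together with the separability of $C^*(\g)$ (so that primitive coincides with prime) to replace the phrase ``$I_{(H,B)}$ is primitive'' by ``$C^*(\gh)$ is prime.'' I would also invoke the observation recorded just before the statement, namely that $\gh$ satisfies Condition (L) if and only if every loop in $\g\setminus H$ has an exit in $\g\setminus H$, so that the topological hypotheses of the two propositions translate verbatim into the loop conditions appearing in cases (1) and (2).

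For the sufficiency direction there is essentially nothing to do. If (1) holds then $B=B_H$ and the hypotheses are exactly those of Proposition \ref{prop7.3} (after the Condition (L) translation), so $I_{(H,B_H)}$ is primitive; if (2) holds then $B=B_H\setminus\{w\}$ and the hypothesis $A\geq w$ for all $A\in\go\setminus H$ is precisely that of Proposition \ref{prop7.4}, which again yields primitivity.

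The substance lies in the necessity direction, and the crux is to rule out $|B_H\setminus B|\geq 2$. Suppose $w_1,w_2\in B_H\setminus B$ are distinct; then $\overline{\g}$ contains the two distinct added vertices $w_1',w_2'$, and since every edge emitted by a primed vertex has range in $H$, no edge of $\pgl$ has source $w_i'$, so both $[w_1']$ and $[w_2']$ are sinks in $\gh$. I would then show that the nonzero projections $q_{[w_1']}$ and $q_{[w_2']}$ satisfy $q_{[w_1']}\,C^*(\gh)\,q_{[w_2']}=0$. On the dense spanning set $\{t_\alpha q_{[A]} t_\beta^*\}$ this amounts to computing $q_{[w_1']} t_\alpha q_{[A]} t_\beta^* q_{[w_2']}$: because $[w_i']$ is a sink, relation (QA3) forces $q_{[w_i']} t_e=0$ for every edge $e$ (and, by taking adjoints, $t_e^* q_{[w_i']}=0$), so the only surviving terms have $|\alpha|=|\beta|=0$ and reduce to $q_{[w_1']}q_{[A]}q_{[w_2']}=q_{[\{w_1'\}\cap A\cap\{w_2'\}]}=q_{[\emptyset]}=0$, using $w_1'\neq w_2'$. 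Consequently the ideals $\langle q_{[w_1']}\rangle$ and $\langle q_{[w_2']}\rangle$, which are nonzero by Theorem \ref{thm3.11}, have zero product, contradicting primeness. Hence $|B_H\setminus B|\leq 1$: either $B=B_H$, and Proposition \ref{prop7.3} delivers (1), or $B=B_H\setminus\{w\}$ for a single $w$, and Proposition \ref{prop7.4} delivers (2).

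The main obstacle is exactly the separation argument of the previous paragraph; everything else is bookkeeping. The delicate points there are verifying that the primed vertices genuinely furnish distinct nonzero sinks (that is, $\{w_i'\}\notin H$, so $q_{[w_i']}\neq 0$) and that $q_{[w_i']}$ annihilates every $t_e$ from the left and every $t_e^*$ from the right. This is what collapses the entire corner $q_{[w_1']}C^*(\gh)q_{[w_2']}$ to zero and thereby obstructs primeness whenever two or more breaking vertices are omitted from $B$, which is precisely the content that upgrades the two propositions to the full equivalence.
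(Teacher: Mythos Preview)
Your proposal is correct and follows essentially the same approach as the paper: reduce to primeness of $C^*(\gh)$, show that primitivity forces $|B_H\setminus B|\leq 1$ by exhibiting (via the sink property of the primed vertices) that the corner $q_{[w_1']}C^*(\gh)q_{[w_2']}$ vanishes when $w_1\neq w_2$, and then invoke Propositions \ref{prop7.3} and \ref{prop7.4}. Your write-up in fact supplies a bit more detail than the paper's own proof (e.g., the explicit verification that $q_{[w_i']}t_e=0$ via (QA3) and (QA1), and that $\{w_i'\}\notin H$), but the argument is the same.
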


\begin{proof}
Let $I_{(H,B)}$ be a primitive ideal in $C^*(\g)$. Then $C^*(\gh)\cong C^*(\g)/I_{(H,B)}$ is a primitive $C^*$-algebra. We claim that $|B_H\setminus B|\leq 1$. Indeed, if $w_1,w_2$ are two distinct vertices in $B_H\setminus B$, similar to the proof of Propositions \ref{prop7.3} and \ref{prop7.4}, the primitivity of $C^*(\gh)$ implies that the corner $q_{[w_1']} C^*(\gh) q_{[w_2']}$ is nonzero. So, there exist $\alpha,\beta\in (\gh)^*$ such that $q_{[w_1']}t_\alpha t_\beta^* q_{[w_2']}\neq 0$. But we must have $|\alpha|=|\beta|=0$ because $[w'_1],[w'_2]$ are two sinks in $\gh$.  Hence, $q_{[w_1']}q_{[w_2']}\neq 0$ which is impossible because $q_{[w_1']}q_{[w_2']}=q_{[\{w_1'\}\cap \{w_2'\}]}=q_{[\emptyset]}=0$. Thus, the claim holds. Now we may apply Propositions \ref{prop7.3} and \ref{prop7.4} to obtain the result.
\end{proof}

Following \cite[Definition 7.1]{kat3}, we say that an ultragraph $\g$ satisfies \emph{Condition (K)} if every vertex $v\in G^0$ either is the base of no loops, or there are at least two loops $\alpha,\beta$ in $\g$ based at $v$ such that neither $\alpha$ nor $\beta$ is a subpath of the other. In view of \cite[Proposition 7.3]{kat3}, if $\g$ satisfies Condition (K), then all ideals of $C^*(\g)$ are of the form $I_{(H,B)}$. So, in this case, Theorem \ref{thm7.6} describes all primitive ideals of $C^*(\g)$.


\section{Purely infinite ultragraph $C^*$-algebras via Fell bundles}

Mark Tomforde in \cite{tom2} determined ultragraph $C^*$-algebras in which every hereditary subalgebra contains infinite projections. Here, we consider the notion of ``\emph{pure infiniteness}'' in the sense of Kirchberg-R${\o}$rdam \cite{kirch00}. In view of Proposition 3.14 and Theorem 4.16 of \cite{kirch00}, a (not necessarily simple) $C^*$-algebra $A$ is \emph{purely infinite} if and only if for every $a\in A^+\setminus \{0\}$ and closed two-sided ideal $I\trianglelefteq A$, $a+I$ in the quotient $A/I$ is either zero or infinite (in this case, $a$ is called \emph{properly infinite}). Recall from \cite[Definition 3.2]{kirch00} that an element $a\in A^+\setminus \{0\}$ is called {\it infinite} if there is $b\in A^+\setminus \{0\}$ such that $a\oplus b\lesssim a\oplus 0$ in the matrix algebra $M_2(A)$.

So, the notion of pure infiniteness is directly related to the structure of ideals and quotients. In this section, we use the quotient ultragraphs to characterize purely infinite ultragraph $C^*$-algebras. Briefly, we consider the natural $\mathbb{Z}$-grading (or Fell bundle) for $C^*(\g)$ and then apply the results of \cite[Section 4]{kwa17} for pure infiniteness of Fell bundles.

\subsection{Condition (K) for $\g$}

To prove the main result of this section, Theorem \ref{thm6.7}, we need to show that an ultragraph $\g$ satisfies Condition (K) if and only if every quotient ultragraph $\gh$ satisfies Condition (L).

{\bf Notation.} Let $\alpha=e_1\ldots e_n$ be a path in an ultragraph $\g$. If $\beta=e_k e_{k+1} \ldots e_l$ is a subpath of $\alpha$, we simply write $\beta\subseteq \alpha$; otherwise, we write $\beta\nsubseteq \alpha$.

First, we show in the absence of Condition (K) for $\g$ that there is a quotient ultragraph $\g/(H,B)$ which does not satisfy Condition (L). For this, let $\g$ contain a loop $\gamma=e_1\ldots e_n$ such that there are no loops $\alpha$ with $s(\alpha)=s(\gamma)$, $\alpha\nsubseteq \gamma$, and $\gamma\nsubseteq\alpha$. If $\gamma^0:=\{s_\g(e_1),\ldots, s_\g(e_n)\}$, define
$$X:=\left\{r_\g(\alpha)\setminus \gamma^0: \alpha\in \g^*,|\alpha|\geq 1, s_\g(\alpha)\in \gamma^0\right\},$$
$$Y:=\left\{\bigcup_{i=1}^n A_i: A_1,\ldots,A_n\in X , n\in \mathbb{N}\right\},$$
and set
$$H_0:=\left\{B\in \go:B\subseteq A ~ \mathrm{for ~ some} ~ A\in Y\right\}.$$
We construct a saturated hereditary subset $H$ of $\go$ as follows: for any $n\in \mathbb{N}$ inductively define
$$S_n:=\left\{w\in G^0: 0<|s_\g^{-1}(w)|<\infty ~~ \mathrm{and} ~~ r_\g(s_\g^{-1}(w))\subseteq H_{n-1}\right\}$$
and
$$H_n:=\left\{A\cup F: A\in H_{n-1} ~~ \mathrm{and} ~~ F\subseteq S_n ~~\mathrm{is ~ a ~ finite ~ subset} \right\}.$$
Then we can see that the subset
$$H=\bigcup_{n=0}^\infty H_n=\left\{A\cup F: A\in H_0 ~~ \mathrm{and} ~~ F\subseteq \bigcup_{n=1}^\infty S_n ~~ \mathrm{is~a ~ finite ~ subset}\right\}$$
is hereditary and saturated.

\begin{lem}\label{lem6.3}
Suppose that $\gamma=e_1\ldots e_n$ is a loop in $\g$ such that there are no loops $\alpha$ with $s(\alpha)=s(\gamma)$ and $\alpha\nsubseteq \gamma$, $\gamma\nsubseteq\alpha$. If we construct the set $H$ as above, then $H$ is a saturated hereditary subset of $\go$. Moreover, we have $A\cap \gamma^0=\emptyset$ for every $A\in H$.
\end{lem}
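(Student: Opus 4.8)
The plan is to verify the two assertions separately, leaning on the explicit description $H=\{A_0\cup F: A_0\in H_0,\ F\text{ a finite subset of }\bigcup_{m\ge1}S_m\}$ recorded just before the statement, together with the observation that the sets $H_n$ increase with $n$ (since $\emptyset\subseteq S_n$ gives $H_{n-1}\subseteq H_n$). The disjointness claim $A\cap\gamma^0=\emptyset$ will come from a short induction, saturation will be essentially immediate from the construction, and the genuine content will be the hereditary axiom (H1); axioms (H2) and (H3) are bookkeeping.

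First I would prove $A\cap\gamma^0=\emptyset$ for every $A\in H$ by induction on the stage $n$ with $A\in H_n$. In the base case each element of $H_0$ is contained in a finite union of sets $r_\g(\beta)\setminus\gamma^0$ with $s_\g(\beta)\in\gamma^0$, hence is disjoint from $\gamma^0$. For the inductive step it suffices to check $S_n\cap\gamma^0=\emptyset$: were a loop vertex $w=s_\g(e_i)$ to lie in $S_n$, it would be regular with $r_\g(e_i)\in H_{n-1}$, yet $r_\g(e_i)\ni s_\g(e_{i+1})\in\gamma^0$ contradicts the inductive hypothesis. Saturation follows at once: if $v$ is regular and every $r_\g(e)$ with $s_\g(e)=v$ lies in $H=\bigcup_m H_m$, then finiteness of $s_\g^{-1}(v)$ places all these ranges in a common $H_N$, so $v\in S_{N+1}$ and $\{v\}\in H_{N+1}\subseteq H$. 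Axioms (H2) and (H3) are read off from the explicit form: a union of two elements regroups as $(A_0\cup B_0)\cup(F\cup G)$ with $A_0\cup B_0\in H_0$ because $Y$ is closed under finite unions, while a set $B\in\go$ with $B\subseteq A_0\cup F$ splits as $(B\cap A_0)\cup(B\cap F)$ with $B\cap A_0\in H_0$ and $B\cap F$ a finite subset of $\bigcup_m S_m$.

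The heart of the matter is (H1): given $\{v\}\in H$ and $s_\g(e)=v$, I must produce $r_\g(e)\in H$. By the dichotomy above, either $v\in S_m$ for some $m$, in which case $r_\g(e)\in H_{m-1}\subseteq H$ immediately, or $\{v\}\in H_0$. In the latter case $v\notin\gamma^0$ and there is a path $\beta$ with $s_\g(\beta)\in\gamma^0$, $|\beta|\ge1$, and $v\in r_\g(\beta)$; then $\beta e$ is again a path issuing from $\gamma^0$, so $r_\g(e)\setminus\gamma^0=r_\g(\beta e)\setminus\gamma^0\in X\subseteq H$. It remains only to exclude $r_\g(e)\cap\gamma^0\ne\emptyset$, for then $r_\g(e)=r_\g(e)\setminus\gamma^0\in H$. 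Assuming $u\in r_\g(e)\cap\gamma^0$, say $u=s_\g(e_j)$ and $s_\g(\beta)=s_\g(e_k)$, I would splice the approach path $e_1\ldots e_{k-1}\beta$ (running from $s(\gamma)$ to $v$), the edge $e$, and the return segment $e_j\ldots e_n$ of $\gamma$ (running from $u$ back to $s(\gamma)$) into a loop $\alpha$ based at $s(\gamma)$. Since $\alpha$ uses the edge $e$ whose source $v$ lies off the loop, $e$ is none of $e_1,\ldots,e_n$, so $\alpha\nsubseteq\gamma$; if I can also guarantee $\gamma\nsubseteq\alpha$, then $\alpha$ is incomparable to $\gamma$, contradicting the standing hypothesis and forcing $r_\g(e)\cap\gamma^0=\emptyset$.

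The main obstacle is exactly securing $\gamma\nsubseteq\alpha$. Because the range map is set-valued, an unguarded choice of the approach path $\beta$ and of the returning segment can reproduce a full copy of $\gamma$ as a consecutive block inside $\alpha$, so incomparability is not automatic. The delicate point will be to choose these pieces economically---for instance a shortest $\beta$ realizing $v\in r_\g(\beta)$ and the shortest return along $\gamma$ from $u$ to $s(\gamma)$---and to feed the resulting minimality into the hypothesis so as to rule the offending edge $e$ out altogether. This is the step where the full strength of the assumption that no loop at $s(\gamma)$ is incomparable to $\gamma$ is consumed, and the one I expect to require the most care.
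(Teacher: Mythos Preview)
Your proposal tracks the paper's proof closely: disjointness by induction on the stage, saturation straight from the definition of the $S_n$, axioms (H2) and (H3) as bookkeeping, and (H1) reduced to the claim that $r_\g(e)\cap\gamma^0=\emptyset$ whenever $s_\g(e)\in H_0\setminus\gamma^0$. The paper dispatches this last point in a single sentence, asserting that a violation would yield a loop based at $s(\gamma)$ incomparable with $\gamma$, without verifying the condition $\gamma\nsubseteq\alpha$ that you singled out.

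Your instinct that this is the genuinely delicate step is correct --- in fact the step cannot be completed under the hypothesis \emph{as stated}. Take $G^0=\{w,v\}$ with edges $e_1\colon w\to\{w,v\}$ and $e\colon v\to\{w\}$, and let $\gamma=e_1$. Every loop at $w$ begins with $e_1$ (the only edge out of $w$), so $\gamma\subseteq\alpha$ for every loop $\alpha$ at $w$ and the lemma's hypothesis holds. But the construction gives $X=\{\{v\},\emptyset\}$, $H_0=\{\emptyset,\{v\}\}$, and $S_n=\emptyset$ for all $n$, so $H=\{\emptyset,\{v\}\}$; yet $\{v\}\in H$ while $r_\g(e)=\{w\}\notin H$, so (H1) fails. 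Any spliced loop you build here (e.g.\ $e_1 e$ or $e_1 e e_1$) contains $\gamma=e_1$ as its first edge, so the contradiction never materialises. Thus neither your minimality idea nor the paper's one-line argument can close this gap without a stronger assumption. The intended context in Proposition~\ref{prop6.3} is the failure of Condition~(K) at $s(\gamma)$ --- i.e.\ \emph{every pair} of loops at $s(\gamma)$ is comparable --- which is strictly stronger (in the example above, $e_1e$ and $e_1e_1$ are incomparable) and rules this counterexample out; the lemma should be read under that stronger hypothesis.
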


\begin{proof}
By induction, we first show that each $H_n$ is a hereditary set in $\g$. For this, we check conditions (H1)-(H3) in Definition \ref{defn2.5}. To verify condition (H1) for $H_0$, let us take $e\in \gl$ with $s_\g(e)\in H_0$. Then $s_\g(e)\in X$ and there is $\alpha\in \g^*$ such that $s_\g(\alpha)\in \gamma^0$ and $s_\g(e)\in r_\g(\alpha)\setminus \gamma^0$. Hence, $s_\g(\alpha e)=s_\g(\alpha)\in \gamma^0$. Moreover, we have $r_\g(\alpha e)\cap \gamma^0=\emptyset$ because the otherwise implies the existence of a path $\beta\in \g^*$ with $s_\g(\beta)=s_\g(\gamma)$ and $\beta\nsubseteq \gamma$, $\gamma\nsubseteq \beta$, contradicting the hypothesis. It turns out
$$r_\g(e)=r_\g(\alpha e)=r_\g(\alpha e)\setminus \gamma^0 \in X\subseteq H_0.$$
Hence, $H_0$ satisfies condition (H1). We may easily verify conditions (H2) and (H3) for $H_0$, so $H_0$ is hereditary. Moreover, for every $w\in S_n$, the range of each edge emitted by $w$ belongs to $H_{n-1}$ by definition. Thus, we can inductively check that each $H_n$ is hereditary, and so is $H=\cup_{n=1}^\infty H_n$. The saturation property of $H$ may be verified similar to the proof of \cite[Lemma 3.12]{tom2}.

It remains to show $A\cap \gamma^0=\emptyset$ for every $A\in H$. To do this, note that $A\cap \gamma^0=\emptyset$ for every $A\in H_0$ because this property holds for all $A\in X$. We claim that $(\cup_{n=1}^\infty S_n)\cap \gamma^0=\emptyset$. Indeed, if $v=s_\g(e_i)\in \gamma^0$ for some $e_i\in \gamma$, then $r_\g(e_i)\cap \gamma^0\neq \emptyset$ and $r_\g(e_i)\notin H_0$. Hence, $\{r_\g(e):e\in \gl,~s_\g(e)=v\}\nsubseteq H_0$ that turns out $v\notin S_1$. So, we have $S_1\cap \gamma^0=\emptyset$. An inductive argument shows $S_n\cap \gamma^0=\emptyset$ for $n\geq 1$, and the claim holds. Now since
$$H=\cup_{n=1}^\infty H_n=\left\{A\cup F: A\in H_0 ~~\mathrm{and} ~~ F\subseteq \cup_{n=1}^\infty S_n ~~ \mathrm{is~ a ~ finite ~ subset}\right\},$$
we conclude that $A\cap \gamma^0=\emptyset$ for all $A\in H$.
\end{proof}

\begin{prop}\label{prop6.3}
An ultragraph $\g$ satisfies Condition (K) if and only if for every admissible pair $(H,B)$ in $\g$, the quotient ultragraph $\g/(H,B)$ satisfies Condition (L).
\end{prop}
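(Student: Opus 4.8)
The plan is to reduce everything to the purely combinatorial reformulation noted just before the statement: a quotient ultragraph $\g/(H,B)$ satisfies Condition (L) if and only if every loop in $\g\setminus H$ has an exit in $\g\setminus H$. Since this condition depends only on the saturated hereditary set $H$, and since every such $H$ occurs in an admissible pair (e.g.\ $(H,B_H)$), the proposition is equivalent to the assertion that $\g$ satisfies Condition (K) if and only if, for every saturated hereditary $H\subseteq\go$, every loop in $\g\setminus H$ has an exit in $\g\setminus H$. I would prove both implications by contraposition.

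For the forward implication, suppose some saturated hereditary $H$ carries a loop $\alpha=e_1\ldots e_n$ in $\g\setminus H$ with no exit in $\g\setminus H$. Unwinding the definition, ``no exit'' means (a) $r_\g(e_i)\setminus s_\g(e_{i+1})\in H$ for every $i$ (indices mod $n$), and (b) every edge $f\neq e_i$ with $s_\g(f)=s_\g(e_i)$ satisfies $r_\g(f)\in H$. A short argument using (H1), (H3), and the fact that $r_\g(e_n)\notin H$ shows $\{s_\g(e_i)\}\notin H$ for all $i$. The key step is then to prove, by induction along any loop $\mu=f_1 f_2\ldots$ based at $v:=s_\g(e_1)$, that $\mu$ must agree edge-by-edge with the periodic word $e_1 e_2\ldots e_n e_1\ldots$; the mechanism is that a wrong first edge, a wrong intermediate source (ruled out by (a)), or a wrong subsequent edge (ruled out by (b)) forces the entire remaining tail of $\mu$ into $H$ by (H1) and (H3), so $\mu$ can never return to $v$. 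Consequently every loop at $v$ is an initial segment of $\alpha^\infty$, any two such loops are comparable under the subpath relation, and thus $v$ is the base of a loop admitting no two incomparable loops; that is, $\g$ fails Condition (K).

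For the reverse implication, assume $\g$ fails Condition (K) and fix a vertex $v$ that is the base of a loop but at which no two incomparable loops exist. Let $\gamma=e_1\ldots e_n$ be a \emph{shortest} loop based at $v$; then every loop at $v$ is comparable to $\gamma$, and minimality guarantees that the only loop which is a subpath of $\gamma$ is $\gamma$ itself. I would then invoke the construction preceding Lemma \ref{lem6.3} to build the saturated hereditary set $H$ with $A\cap\gamma^0=\emptyset$ for all $A\in H$ and $r_\g(\beta)\setminus\gamma^0\in H$ for every path $\beta$ leaving $\gamma^0$, and take $B:=B_H$. Since $v\in r_\g(e_n)$ and $H$ avoids $\gamma^0$, the loop $\gamma$ descends to a loop in $\g\setminus H$. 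To see it has no exit I argue by contradiction: a type-(i) exit would mean $r_\g(e_i)$ meets $\gamma^0$ in a vertex other than $s_\g(e_{i+1})$, while a type-(ii) exit $f$ would have $r_\g(f)\notin H$ and hence (as $r_\g(f)\setminus\gamma^0\in H$) meet $\gamma^0$; in either case I splice $\gamma$ with the offending edge and continue around $\gamma$ to produce a loop $\alpha$ at $v$ that diverges from $\gamma$, and is therefore incomparable to it, contradicting the choice of $v$ and $\gamma$. Hence $\gamma$ has no exit in $\g\setminus H$ and $\g/(H,B_H)$ fails Condition (L).

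I expect the genuine obstacle to be the last step of the reverse implication: verifying that each spliced loop is truly incomparable to $\gamma$ under the \emph{consecutive} subpath relation when the vertices or edges of $\gamma$ repeat, since a divergent loop might still contain $\gamma$ as an interior block. The minimality of $\gamma$ (which collapses ``$\alpha\subseteq\gamma$'' to ``$\alpha=\gamma$''), together with the case analysis already used in the proof of Lemma \ref{lem6.3}, where a path returning to $\gamma^0$ is shown to yield a loop incomparable to $\gamma$, should resolve this, but it is the part requiring the most careful bookkeeping. All remaining steps are routine manipulations with the axioms (H1)--(H3) and saturation.
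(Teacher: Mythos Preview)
Your proposal is correct, and the reverse implication (failure of (K) $\Rightarrow$ some quotient fails (L)) is essentially the paper's argument: both invoke the saturated hereditary set $H$ built in Lemma~\ref{lem6.3} from a loop $\gamma$ admitting no incomparable companion, and both verify that $\gamma$ survives in $\g/(H,B_H)$ without an exit by the splicing contradiction. Your added hypothesis that $\gamma$ be shortest is not in the paper, but it is a sensible safeguard for exactly the bookkeeping issue you flag; the paper simply asserts $\alpha\nsubseteq\gamma$ and $\gamma\nsubseteq\alpha$ for the spliced loop without elaboration.

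The forward implication is where you diverge. The paper argues directly: given a loop $\alpha=e_1\ldots e_n$ in $\g/(H,B)$, Condition~(K) supplies a loop $\beta=f_1\ldots f_m$ at $s_\g(\alpha)$ incomparable to $\alpha$; after arranging the first point of disagreement one has $f_k\neq e_k$ with $s_\g(\beta)\in r_\g(\beta)\notin H$, and hereditarity forces $r_\g(f_k)\notin H$, so $f_k$ is an exit. This is two lines. Your route is the contrapositive: starting from a loop in $\g\setminus H$ with no exit, you show every loop at its base must track $\alpha^\infty$ edge by edge, whence all loops there are pairwise comparable and (K) fails. Your argument is correct (the dichotomy ``wrong source via (a)'' versus ``wrong edge via (b)'' does push the tail into $H$), but it is considerably longer than the paper's and proves a stronger intermediate fact than is needed. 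Either approach is fine; the direct one is simply more economical.
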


\begin{proof}
Suppose that $\g$ satisfies Condition (K) and $(H,B)$ is an admissible pair in $\g$. Let $\alpha=e_1\ldots e_n$ be a loop in $\g/(H,B)$. Since $\alpha$ is also a loop in $\g$, there is a loop $\beta=f_1\ldots f_m$ in $\g$ with $s_\g(\alpha)=s_\g(\beta)$, and neither $\alpha\subseteq \beta$ nor $\beta\subseteq \alpha$. Without loos of generality, assume $e_1\neq f_1$. By the fact $s_\g(\alpha)=s_\g(\beta)\in r_\g(\beta)$, we have $r_\g(\beta)\notin H$, and so $r_\g(f_1)\notin H$ by the hereditary property of $H$. Therefore, $f_1$ is an exit for $\alpha$ in $\g/(H,B)$ and we conclude that $\g/(H,B)$ satisfies Condition (L).

For the converse, suppose on the contrary that $\g$ does not satisfy Condition (K). Then there exists a loop $\gamma=e_1\ldots e_n$ in $\g$ such that there are no loops $\alpha$ with $s(\alpha)=s(\gamma)$, $\alpha\nsubseteq \gamma$, and $\gamma\nsubseteq \alpha$. As Lemma \ref{lem6.3}, construct a saturated hereditary subset $H$ of $\go$ and consider the quotient ultragraph $\g/(H,B_H)=(\pGo,\pgo,\pgl,r,s)$. We show that $\gamma$ as a loop in $\g/(H,B_H)$ has no exits and $r(e_i)=s(e_{i+1})$ for $1\leq i\leq n$. If $f$ is an exit for $\gamma$ in $\g/(H,B_H)$ such that $s(f)=s(e_j)$ and $f\neq e_j$, then $r_\g(f)\notin H$ and $r_\g(f)\cap \gamma^0\neq \emptyset$ (if $r_\g(f)\cap \gamma^0= \emptyset$, then $r_\g(f)=r_\g(f)\setminus \gamma^0\in X\subseteq H$, a contradiction). So, there is $e_l\in \gamma$ such that $s_\g(e_l)\in r_\g(f)$. If we set $\alpha:=e_1\ldots e_{j-1}fe_l\ldots e_n$, then $\alpha$ is a loop in $\g$ with $s_\g(\alpha)=s_\g(\gamma)$, and $\alpha\nsubseteq \gamma$, $\gamma\nsubseteq \alpha$, that contradicts the hypothesis. Therefore, $\gamma$ has no exits in $\g/(H,B_H)$. Moreover, we have $r(e_i)\cap [\gamma^0]=s(e_{i+1})$ for each $1\leq i\leq n$, because the otherwise gives an exit for $\gamma$ in $\g/(H,B_H)$ by the construction of $H$. Hence,
$$r(e_i)\setminus s(e_{i+1})= r(e_i)\setminus [\gamma^0]=[\emptyset]$$
and we get $r(e_i)=s(e_{i+1})$ (note that the fact $r_\g(e_i)\setminus \gamma^0\in H$ implies $r(e_i)\setminus [\gamma^0 ]=[ \overline{r_\g(e_i)}\setminus \gamma^0 ]=[\emptyset ]$). Therefore, the quotient ultragraph $\g/(H,B_H)$ does not satisfy Condition (L) as desired.
\end{proof}

\subsection{Purely infinite ultragraph $C^*$-algebras via Fell bundles}

Every quotient ultragraph (or ultragraph) $C^*$-algebra $C^*(\gh)=C^*(q_{[A]},t_e)$ is equipped with a natural $\mathbb{Z}$-grading or Fell bundle $\mathcal{B}=\{B_n:n\in \mathbb{Z}\}$ with the fibers
$$B_n:=\overline{\mathrm{span}}\left\{ t_\mu q_{[A]} t_\nu^*:\mu,\nu\in (\gh)^*, ~ |\mu|-|\nu|=n\right\}.$$
These Fell bundles will be considered in this section. The fiber $B_0$ is the fixed point $C^*$-subalgebra of $C^*(\gh)$ for the gauge action which is an AF $C^*$-algebra. An application of the gauge invariant uniqueness theorem implies that $C^*(\gh)$ is isomorphic to the cross sectional $C^*$-algebra $C^*(\mathcal{B})$ (we refer the reader to \cite{exe15} for details about Fell bundles and their $C^*$-algebras). Moreover, since $\mathbb{Z}$ is an amenable group, combining Theorem 20.7 and Proposition 20.2 of \cite{exe15} implies that $C^*(\gh)$ is also isomorphic to the reduced cross sectional $C^*$-algebra $C^*_r(\mathcal{B})$.

Following \cite[Definition 2.1]{exe02}, an \emph{ideal} in a Fell bundle $\mathcal{B}=\{B_n\}$ is a family $\mathcal{J}=\{J_n\}_{n\in\mathbb{Z}}$ of closed subspaces $J_n\subseteq B_n$, such that $B_mJ_n\subseteq J_{mn}$ and $J_n B_m\subseteq J_{nm}$ for all $m,n\in \mathbb{Z}$. If $\mathcal{J}$ is an ideal of $\mathcal{B}$, then the family $\mathcal{B}/\mathcal{J}:=\{B_n/J_n\}_{n\in \mathbb{Z}}$ is equipped with a natural Fell bundle structure, which is called a \emph{quotient Fell bundle} of $\mathcal{B}$, cf. \cite[Definition 21.14]{exe15}.

\begin{defn}[{\cite[Definition 4.1]{kwa17}}]
Let $\gh$ be a quotient ultragraph and $\mathcal{B}=\{B_n\}_{n\in \mathbb{Z}}$ is the above Fell bundle in $C^*(\gh)$. We say that $\mathcal{B}$ is \emph{aperiodic} if for each $n\in\mathbb{Z}\setminus\{0\}$, each $b_n\in B_n$, and every hereditary subalgebra $A$ of $B_0$, we have
$$\inf\left\{\|a b_n a\|: a\in A^+,~ \|a\|=1\right\}=0.$$
Furthermore, $\mathcal{B}$ is called \emph{residually aperiodic} whenever the quotient Fell bundle $\mathcal{B}/\mathcal{J}$ is aperiodic for every ideal $\mathcal{J}$ of $\mathcal{B}$.
\end{defn}

The following lemma is analogous to \cite[Proposition 7.3]{kwa17} for quotient ultragraphs.

\begin{lem}\label{lem6.5}
Let $\gh$ be a quotient ultragraph and let $\mathcal{B}=\{B_n\}_{n\in \mathbb{Z}}$ be the Fell bundle associated to $C^*(\gh)$. Then $\mathcal{B}$ is aperiodic if and only if $\gh$ satisfies Condition (L).
\end{lem}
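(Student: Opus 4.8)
The plan is to prove the two directions separately, exploiting the approximation of $C^*(\gh)$ by the finite-graph $C^*$-algebras $C^*(G_F)$ established in Section 4. The aperiodicity condition is a statement about the Fell bundle $\mathcal{B} = \{B_n\}$, so the strategy is to relate the analytic condition $\inf\{\|a b_n a\| : a\in A^+, \|a\|=1\} = 0$ to the combinatorial presence or absence of loops without exits. The key link is Lemma \ref{lem5.1}: if $\gh$ fails Condition (L), there is a loop $\gamma = e_1\dots e_n$ without exits and with $r(e_i)=s(e_{i+1})$, yielding an ideal Morita-equivalent to $C(\mathbb{T})$.

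For the direction ``Condition (L) $\Rightarrow$ aperiodic'', first I would fix $n\in \mathbb{Z}\setminus\{0\}$, an element $b_n \in B_n$, and a hereditary subalgebra $A$ of $B_0$. Since elements of the form $t_\mu q_{[D]} t_\nu^*$ with $|\mu|-|\nu|=n$ span a dense subspace of $B_n$, it suffices to verify the infimum condition on such spanning elements and then pass to the limit by a standard $\varepsilon$-argument. The heart of the matter is to produce, for each $\varepsilon>0$, a positive norm-one element $a\in A$ with $\|a b_n a\|<\varepsilon$. Here I would approximate $b_n$ within some $C^*(G_{F})$ (via Corollary \ref{cor4.3}) and invoke the corresponding aperiodicity/Condition (L) statement at the finite-graph level, where the desired element $a$ can be built by shifting along a sufficiently long path that avoids returning in a periodic fashion. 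Condition (L) guarantees that every loop has an exit, so one can always escape periodicity and drive $\|a b_n a\|$ to zero; this is essentially the content of \cite[Proposition 7.3]{kwa17} transported through the graph approximation.

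For the converse ``aperiodic $\Rightarrow$ Condition (L)'', I would argue by contraposition. If $\gh$ fails Condition (L), Lemma \ref{lem5.1} supplies a loop $\gamma$ without exits, $r(e_i)=s(e_{i+1})$, and a full corner $q_{[v]} I_\gamma q_{[v]} \cong C(\mathbb{T})$ inside $C^*(\gh)$. Setting $[v]:=s(\gamma)$, the element $t_\gamma \in B_n$ with $n=|\gamma|\neq 0$ witnesses periodicity: taking $A$ to be the hereditary subalgebra generated by $q_{[v]}$, one computes that for every $a\in A^+$ with $\|a\|=1$ the quantity $\|a\, t_\gamma\, a\|$ is bounded below away from zero, because $t_\gamma$ acts as a unitary-like shift on the corner $\cong C(\mathbb{T})$ and cannot be killed by compression within the hereditary subalgebra. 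This contradicts aperiodicity, establishing the converse.

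The main obstacle I anticipate is the forward direction, specifically the passage from the finite-graph statement back to the infinite quotient ultragraph. The subtlety is that the hereditary subalgebra $A \subseteq B_0$ need not be captured inside any single $C^*(G_{F_n})^{(0)}$, so one must carefully use the density of $\bigcup_n C^*(G_{F_n})$ together with the fact that hereditary subalgebras are determined by their positive elements up to approximation. Controlling the compression $a b_n a$ uniformly as $F_n$ grows, and ensuring that the path chosen to witness aperiodicity at each finite stage can be selected compatibly, will require the most care; this is where the hypothesis that every loop in $\gh$ has an exit does the essential work, since it is exactly what allows the witnessing element $a$ to be found at each finite level.
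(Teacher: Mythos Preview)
Your approach differs from the paper's. The paper does not pass through the finite-graph approximation of Section~4 at all; it simply observes that the proof of \cite[Proposition~7.3]{kwa17} (stated for ordinary graph $C^*$-algebras) carries over verbatim once one replaces the spanning elements $s_\alpha s_\beta^*$ and $s_\mu s_\mu^*$ by $t_\alpha q_{[A]} t_\beta^*$ and $t_\mu q_{[A]} t_\mu^*$. The combinatorial argument---pick a positive $c$ in the given hereditary subalgebra, approximate it by a finite sum of projections $t_\mu q_{[D]} t_\mu^*$, and use Condition~(L) to locate a path $\lambda$ so that the subprojection $t_\lambda q_{[C]} t_\lambda^*$ both sits under $c$ (hence in $A$) and annihilates $b_n$---works directly in $\gh$ and never leaves $C^*(\gh)$.

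Your contrapositive for ``aperiodic $\Rightarrow$ Condition~(L)'' via Lemma~\ref{lem5.1} is fine and is essentially the same idea as the direct argument. The forward direction, however, has a genuine gap that you yourself flag but do not close. Invoking aperiodicity of the finite-graph bundle for $C^*(G_F)$ produces a norm-one positive element $a'$ inside some hereditary subalgebra of $B_0^{G_F}$, but the hereditary subalgebra $A\subseteq B_0$ you started with is fixed in $C^*(\gh)$, and there is no reason $a'$ lands in $A$. Approximating a chosen $c\in A^+$ by $c'\in C^*(G_F)$ and working in $\overline{c' B_0^{G_F} c'}$ does not help directly, since $c'\notin A$ in general; you would need an explicit perturbation argument to push $a'$ back into $\overline{cB_0c}\subseteq A$ while controlling $\|a' b_n a'\|$, and your proposal does not supply one. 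The paper's direct route sidesteps this entirely: because the witnessing element is constructed as an explicit subprojection of (an approximant of) $c$ using only the path structure of $\gh$, it lies in $A$ by heredity from the outset, and no transfer between $C^*(G_F)$ and $C^*(\gh)$ is needed.
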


\begin{proof}
We may modify the proof of \cite[Proposition 7.3]{kwa17} for our case by replacing elements $s_\alpha s_\beta^*$ and $s_\mu s_\mu^*$ with $t_\alpha q_{[A]}t_\beta^*$ and $t_\mu q_{[A]}t_\mu^*$, respectively. Then the proof goes along the same lines as the one in \cite[Proposition 7.3]{kwa17}.
\end{proof}

\begin{cor}\label{cor6.6}
Let $\g$ be an ultragraph and let $\mathcal{B}=\{B_n\}_{n\in\mathbb{Z}}$ be the described Fell bundle of $C^*(\g)$. If $\g$ satisfies Condition (K), then $\mathcal{B}$ is residually aperiodic.
\end{cor}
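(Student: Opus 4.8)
The plan is to deduce Corollary \ref{cor6.6} directly from the pieces already assembled in the excerpt, so the argument is short. The target statement says that if $\g$ satisfies Condition (K), then the Fell bundle $\mathcal{B}$ of $C^*(\g)$ is residually aperiodic. By the definition of residual aperiodicity, I must show that for every ideal $\mathcal{J}$ of $\mathcal{B}$, the quotient Fell bundle $\mathcal{B}/\mathcal{J}$ is aperiodic. The natural strategy is to identify each such quotient Fell bundle with the Fell bundle of some quotient ultragraph $C^*$-algebra, and then invoke Lemma \ref{lem6.5} together with Proposition \ref{prop6.3}.

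First I would recall that $C^*(\g)$ is graded by $\mathbb{Z}$ via the gauge action, with fixed-point algebra $B_0$ an AF-algebra, and that the cross-sectional $C^*$-algebra $C^*(\mathcal{B})$ is isomorphic to $C^*(\g)$ (amenability of $\mathbb{Z}$ also gives $C^*_r(\mathcal{B})\cong C^*(\g)$). The key bridge is that ideals of the Fell bundle $\mathcal{B}$ correspond to \emph{gauge-invariant} ideals of $C^*(\g)$: an ideal $\mathcal{J}=\{J_n\}$ of $\mathcal{B}$ generates a gauge-invariant ideal $I=\overline{\mathrm{span}}\bigcup_n J_n$ of $C^*(\g)$, and conversely a gauge-invariant ideal $I$ restricts to an ideal of the bundle via $J_n:=I\cap B_n$, with $\bigoplus_n J_n$ dense in $I$. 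Under this correspondence the quotient Fell bundle $\mathcal{B}/\mathcal{J}=\{B_n/J_n\}$ is precisely the $\mathbb{Z}$-grading induced on the quotient $C^*$-algebra $C^*(\g)/I$. By \cite[Theorem 6.12]{kat3} every gauge-invariant ideal is of the form $I_{(H,B)}$ for an admissible pair $(H,B)$, and by Proposition \ref{prop5.1} we have $C^*(\g)/I_{(H,B)}\cong C^*(\gh)$. This isomorphism is gauge-equivariant (it sends generators to generators, respecting degrees), so it carries the grading on $C^*(\g)/I_{(H,B)}$ onto the natural grading of $C^*(\gh)$. Hence $\mathcal{B}/\mathcal{J}$ is isomorphic, as a Fell bundle, to the Fell bundle of $C^*(\gh)$.

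With this identification in hand, the conclusion follows immediately: since $\g$ satisfies Condition (K), Proposition \ref{prop6.3} guarantees that every quotient ultragraph $\gh$ satisfies Condition (L), and then Lemma \ref{lem6.5} tells us that the Fell bundle of $C^*(\gh)$ is aperiodic. Therefore each quotient $\mathcal{B}/\mathcal{J}\cong$ (Fell bundle of $C^*(\gh)$) is aperiodic, which is exactly the definition of $\mathcal{B}$ being residually aperiodic.

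The main obstacle I anticipate is the correspondence between Fell bundle ideals $\mathcal{J}$ of $\mathcal{B}$ and gauge-invariant ideals of $C^*(\g)$, and the verification that the quotient bundle is \emph{graded-isomorphic} to the bundle of $C^*(\gh)$ rather than merely giving an isomorphism of the cross-sectional algebras. One must check that $\{B_n/J_n\}$ really is the fiber decomposition induced by the gauge action on $C^*(\g)/I_{(H,B)}$, i.e. that the Fell-bundle structure is compatible with, and determined by, the $\mathbb{T}$-action; this is where the gauge-equivariance of the isomorphism in Proposition \ref{prop5.1} is essential. Once that compatibility is recorded, the rest is a direct assembly of Proposition \ref{prop6.3} and Lemma \ref{lem6.5}, so I would state the ideal correspondence carefully and then keep the proof brief.
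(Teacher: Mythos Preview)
Your proposal is correct and follows essentially the same route as the paper: identify each Fell bundle ideal with a gauge-invariant ideal $I_{(H,B)}$, recognize the quotient bundle as the natural grading on $C^*(\gh)$ via Proposition \ref{prop5.1}, and then apply Proposition \ref{prop6.3} together with Lemma \ref{lem6.5}. The only cosmetic difference is that the paper invokes \cite[Proposition 7.3]{kat3} (under Condition (K) all ideals of $C^*(\g)$ are gauge invariant) to set up the correspondence, whereas you use the general bijection between Fell bundle ideals and gauge-invariant ideals; either justification suffices here.
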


\begin{proof}
Suppose that $\g$ satisfies Condition (K). In view of \cite[Proposition 7.3]{kat3}, we know that all ideals of $C^*(\g)$ are graded and of the form $I_{(H,B)}$. So, each ideal $\mathcal{J}=\{J_n\}_{n\in\mathbb{Z}}$ of $\mathcal{B}$ is corresponding with an ideal $I_{(H,B)}$ with the homogenous components $J_n:=I_{(H,B)}\cap B_n$. Moreover, the quotient Fell bundle $\mathcal{B}/\mathcal{J}:=\{B_n/J_n:n\in\mathbb{Z}\}$ is a grading (or a Fell bundle) for $C^*(\g)/I_{(H,B)}\cong C^*(\g/(H,B))$. Therefore, quotient Fell bundles $\mathcal{B}/\mathcal{J}$ are corresponding with quotient ultragraphs $\gh$. Since such quotient ultragraphs satisfy Condition (L) by Proposition \ref{prop6.3}, Lemma \ref{lem6.5} follows the result.
\end{proof}

\begin{thm}\label{thm6.7}
Let $\g$ be an ultragraph. Then $C^*(\g)$ is purely infinite (in the sense of \cite{kirch00}) if and only if $\g$ satisfies Condition (K), and for every saturated hereditary subset $H$ of $\go$, we have
\begin{enumerate}
  \item $B_H=\emptyset$, and
  \item every $A\in\go\setminus H$ connects to a loop $\alpha$ in $\g\setminus H$, which means $A\geq s_\g(\alpha)$ (see Definition \ref{defn7.1}).
\end{enumerate}
\end{thm}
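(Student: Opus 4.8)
The plan is to characterize pure infiniteness of $C^*(\g)$ through the residually aperiodic Fell bundle criterion of Kwa\'sniewski--Szyma\'nski \cite{kwa17}, translated into the graph-theoretic conditions on $\g$ via the quotient ultragraph machinery developed in the previous sections. By \cite[Section 4]{kwa17}, a separable $C^*$-algebra carrying a Fell bundle $\mathcal{B}$ over an amenable group is purely infinite (in the sense of \cite{kirch00}) precisely when $\mathcal{B}$ is residually aperiodic and the underlying algebra satisfies a suitable ``properly infinite projections fill up the quotients'' condition. I would first invoke Proposition \ref{prop6.3} and Corollary \ref{cor6.6} to identify Condition (K) for $\g$ with residual aperiodicity of the grading $\mathcal{B}=\{B_n\}$, so that one direction of the Fell bundle dichotomy is already in place; the remaining work is to pin down which structural features of $\g$ force every nonzero positive element, in every graded quotient, to be properly infinite.

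The key reduction is Proposition \ref{prop5.1}: since $C^*(\g)/I_{(H,B)}\cong C^*(\gh)$ and (under Condition (K)) every ideal is graded of the form $I_{(H,B)}$ by \cite[Proposition 7.3]{kat3}, checking that every element of every quotient is zero or infinite amounts to checking that every nonzero projection $q_{[A]}$ in each $C^*(\gh)$ is properly infinite. I would argue that proper infiniteness of all the $q_{[A]}$ is controlled at the level of generators: using the spanning description $C^*(\gh)=\overline{\mathrm{span}}\{t_\alpha q_{[A]}t_\beta^*\}$ and the Cuntz--Krieger relations (QA1)--(QA4), it suffices to handle the vertex projections $q_{[v]}$ for $[v]\in \pGo$. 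Here condition (1), $B_H=\emptyset$, guarantees there are no breaking-vertex sink projections $q_{[w']}$ obstructing infiniteness (such $[w']$ are sinks in $\gh$ and their projections are finite), while condition (2) provides, for each $A\in\go\setminus H$, a path in $\g\setminus H$ from $A$ into a loop; that loop lives in $\gh$ and, combined with Condition (L) (an exit), yields a Cuntz-algebra-type subquotient forcing $q_{[A]}\precsim q_{[C]}\oplus q_{[C]}$ and hence proper infiniteness of $q_{[A]}$.

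For the forward direction I would contrapose: if Condition (K) fails, Proposition \ref{prop6.3} produces a quotient $\gh$ violating Condition (L), whence Lemma \ref{lem5.1} gives an ideal Morita equivalent to $C(\mathbb{T})$, and $C(\mathbb{T})$ contains nonzero finite (indeed abelian) projections, contradicting pure infiniteness since finiteness passes to the quotient $C^*(\gh)\cong C^*(\g)/I_{(H,B)}$. If instead some $B_H\neq\emptyset$, the associated projection $q_{[w']}$ is a sink projection in $\gh$ that is finite, again violating the criterion after passing to the quotient. Finally, if condition (2) fails, some $A\in\go\setminus H$ connects to no loop in $\g\setminus H$; then in $\gh$ the hereditary subalgebra generated by $q_{[A]}$ reaches no loop, so by the same analysis as for graph $C^*$-algebras it contains a nonzero finite projection, contradicting proper infiniteness.

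The main obstacle I anticipate is the proper-infiniteness computation in the converse (sufficiency) direction, specifically showing that the existence of a path into a loop with an exit genuinely forces $q_{[A]}$ to be properly infinite \emph{within the quotient}, rather than merely infinite. This requires carefully producing, from condition (2) and Condition (L), two isometries in $q_{[A]}C^*(\gh)q_{[A]}$ (or a suitable corner) with orthogonal ranges dominated by $q_{[A]}$ — the ultragraph range sets and the equivalence-class arithmetic on $\pgo$ make the bookkeeping of which $t_\alpha q_{[D]} t_\beta^*$ multiply correctly more delicate than in the graph case. I would manage this by transporting the question to the finite graphs $G_F$ of Proposition \ref{prop4.2} via Corollary \ref{cor4.3}, where proper infiniteness of vertex projections along loops-with-exits is the classical graph $C^*$-algebra result, and then passing to the inductive limit.
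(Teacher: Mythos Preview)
Your proposal is correct and follows essentially the same route as the paper. Both directions use the same ingredients: necessity via the three obstructions (failure of (K) through Lemma \ref{lem5.1} and the $C(\mathbb{T})$-ideal; $B_H\neq\emptyset$ through the finite sink projection $q_{[w']}$; failure of (2) through an AF corner), and sufficiency via the residually aperiodic Fell bundle criterion of \cite{kwa17} combined with Corollary \ref{cor6.6}. The one place you diverge is the final infiniteness computation: you propose to transport it to the finite graphs $G_F$ via Corollary \ref{cor4.3} and invoke the classical graph-algebra result, whereas the paper argues directly in each quotient $C^*(\g/(H,\emptyset))$ by exhibiting $(t_\alpha q_{s(\alpha)})(t_\alpha q_{s(\alpha)})^*+t_ft_f^*\leq q_{s(\alpha)}$ from the loop $\alpha$ and its exit $f$, then using $q_{s(\alpha)}\precsim t_\mu q_{[B]}t_\mu^*$ and \cite[Lemma 3.17]{kirch00}. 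Your $G_F$ route would work, but the direct computation is two lines and sidesteps the inductive-limit bookkeeping you flag as the main obstacle; the paper also invokes \cite[Lemma 5.13]{kwa17} to reduce from arbitrary projections in $B_0$ to those of the form $s_\mu p_B s_\mu^*$, which is the precise version of the reduction you sketch via (QA1)--(QA4).
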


\begin{proof}
First, suppose that $C^*(\g)$ is purely infinite. If $\g$ does not satisfy Condition (K), by the second paragraph in the proof of Proposition \ref{prop6.3}, there is a quotient ultragraph $\gh$ containing a loop $\alpha\in (\gh)^*$ with no exits in $\gh$. The argument of Lemma \ref{lem5.1} follows that the ideal $J:=\langle q_{s(\alpha)}\rangle\unlhd C^*(\gh)$ is Morita-equivalent to $C(\mathbb{T})$. Hence, the projection $p_{s(\alpha)}$ is not properly infinite which contradicts \cite[Theorem 4.16]{kirch00}.

Now assume that $H$ is a saturated hereditary subset of $\go$. We consider the quotient ultragraph $\g/(H,\emptyset)$ and take an arbitrary $[A]\in \pgo\setminus \{[\emptyset]\}$. If there is no loops $\alpha\in r_\g^{-1}(\go\setminus H)$ with $A\geq s_\g(\alpha)$, then the ideal $I_{[A]}:=\langle q_{[A]}\rangle\unlhd C^*(\g/(H,\emptyset))$ is AF. Thus $q_{[A]}$ is not infinite and $C^*(\g)$ contains a non-properly infinite projection, contradicting \cite[Theorem 4.16]{kirch00}. Moreover, we notice that for any $w\in B_H$, $[w']$ is a sink in $\g/(H,\emptyset)$ and the projection $q_{[w']}$ is not infinite, which is impossible.

Conversely, suppose that $\g$ satisfies Condition (K) and the asserted properties hold for any saturated hereditary set $H$. To show that $C^*(\g)$ is purely infinite we apply \cite[Theorem 5.12]{kwa17} for the pure infiniteness of Fell bundles. Let $\mathcal{B}=\{B_n\}_{n\in \mathbb{Z}}$ be the natural Fell bundle in $C^*(\g)$. Corollary \ref{cor6.6} says that $\mathcal{B}$ is residually aperiodic. Moreover, every projection in $B_0$ is Murray-von Neumann equivalent to a finite sum $\sum_{i=1}^n r_i s_{\alpha_i} p_{B_i} s_{\beta_i}^*$ of mutually orthogonal projections such that $|\alpha_i|=|\beta_i|$ for $1\leq i\leq n$. Note that each projection $s_{\alpha_i} p_{B_i} s_{\beta_i}^*$ is Murray-von Neumann equivalent to $\left(s_{\alpha_i}p_{B_i}\right)^*\left(p_{B_i}s_{\beta_i}\right)$ which equals to zero unless $\alpha_i=\beta_i$. Hence, in view of \cite[Lemma 5.13]{kwa17}, it suffices to show that every nonzero projection of the form $s_\mu p_B s_\mu^*$ is properly infinite.

Let $I_{(H,\emptyset)}$ be an ideal in $C^*(\g)$ such that $s_\mu p_B s_\mu^*\notin I_{(H,\emptyset)}$. Then $B\cap r_\g(\mu)\in \go\setminus H$. Assume $C^*(\g/(H,\emptyset))=C^*(t_e,q_{[A]})$ and let $q:C^*(\g)\rightarrow C^*(\g/(H,\emptyset))$ be the canonical quotient map by Proposition \ref{prop5.1}. Then $q(s_\mu p_B s_\mu^*)=t_\mu q_{[B]}t_\mu^*\neq 0$. By hypothesis, there are a path $\lambda$ and a loop $\alpha\in r_\g^{-1}(\go\setminus H)$ such that $s_\g(\lambda)\in B\cap r_\g(\mu)$ and $s_\g(\alpha)\in r_\g(\lambda)$. Since $\g$ satisfies Condition (K), $\alpha$ has an exit $f$ in $r^{-1}(\go\setminus H)$. Thus we have
$$\left(t_\alpha q_{s(\alpha)}\right)\left(t_\alpha q_{s(\alpha)}\right)^* +t_f t_f^*\leq q_{s(\alpha)},$$
and since
$$\left(t_\alpha q_{s(\alpha)}\right)\left(t_\alpha q_{s(\alpha)}\right)^*\sim \left(t_\alpha q_{s(\alpha)}\right)^*\left(t_\alpha q_{s(\alpha)}\right)=q_{s(\alpha)},$$
it turns out that $q_{s(\alpha)}$ is an infinite projection in $C^*(\g/(H,\emptyset))\cong C^*(\g)/I_{(H,\emptyset)}$. On the other hand, the fact
$$\left(t_{\mu\lambda} q_{s(\alpha)}\right)^* t_\mu q_{[B]} t_\mu^* \left(t_{\mu\lambda} q_{s(\alpha)}\right)=q_{s(\alpha)}$$
says that $q_{s(\alpha)}\precsim t_\mu q_{[B]}t_\mu^*$ (see \cite[Proposition 2.4]{ror92}), and thus $t_\mu q_{[B]} t_\mu^*$ is infinite by \cite[Lemma 3.17]{kirch00}. It follows that  $s_\mu p_B s_\mu^*$ is a properly infinite projection. Now apply \cite[Theorem 5.11(ii)]{kwa17} to conclude that the $C^*$-algebra $C^*(\g)\cong C_r^*(\mathcal{B})$ is purely infinite.
\end{proof}


\end{document}